\newtheorem{theorem}{Theorem}[section]
\newtheorem{lemma}[theorem]{Lemma}
\newtheorem{proposition}[theorem]{Proposition}
\theoremstyle{definition}
\newtheorem{remark}[theorem]{Remark}
\newtheorem{problem}[theorem]{Problem}
\numberwithin{equation}{section}
\title{\bfseries{The non-existence of some Moore polygons and spectral Moore bounds}}
\author{Sebastian Cioab\u{a}\thanks{Department of Mathematical Sciences, 
	University of Delaware, 
	Newark, DE 19716, 
	USA, {\tt cioaba@udel.edu}. S.M. Cioab\u{a} was partially supported by NSF grant DMS-2245556.}
\and
Vishal Gupta\thanks{Department of Mathematical Sciences, 
	University of Delaware, 
	Newark, DE 19716, 
	USA, {\tt vishal@udel.edu}.}
\and 
Hiroshi Nozaki\thanks{Department of Mathematics Education, 
	Aichi University of Education, 
	1 Hirosawa, Igaya-cho, 
	Kariya, Aichi 448-8542, 
	Japan, {\tt hnozaki@auecc.aichi-edu.ac.jp}. H. Nozaki was partially supported by JSPS KAKENHI Grant Numbers 22K03402 and 24K06688.}
    \and
Ziqing Xiang\thanks{Department of Mathematics and National Center for Applied Mathematics, 
Southern University of Science and Technology, 
1088 Xueyuan Avenue, Shenzhen 518055, 
China, {\tt xiangzq@sustech.edu.cn}}.}
\date{\today}
\begin{document}
\maketitle

\begin{abstract}
In this paper, we study the maximum order $v(k, \theta)$ of a connected $k$-regular graph whose second largest eigenvalue is at most $\theta$. From Alon–Boppana and Serre, we know that $v(k, \theta)$ is finite when $\theta < 2\sqrt{k-1}$ while the work of Marcus, Spielman, and Srivastava implies that $v(k,\theta)$ is infinite if $\theta\geq 2\sqrt{k-1}$. Cioab\u{a}, Koolen, Nozaki, and Vermette obtained a general upper bound on $v(k, \theta)$ via Nozaki's linear programming bound and determined many values of $v(k,\theta)$. The graphs attaining this bound are distance-regular and are called Moore polygons. Damerell and Georgiacodis proved that there are no Moore polygons of diameter $6$ or more. For smaller diameters, there are infinitely many Moore polygons.

We complement these results by proving two nonexistence results for Moore polygons with specific parameters. We also determine new values of $v(k,\theta)$: $v(4, \sqrt{2}) = 14$ and $v(5, \sqrt{2}) = v(5,\sqrt{5}-1)=16$. The former is achieved by the co-Heawood graph, and the latter by the folded $5$-cube. We verify that any connected $5$-regular graph with second eigenvalue $\lambda_2$ exceeding $1$ satisfies $\lambda_2 \geq \sqrt{5} - 1$, and that the unique $5$-regular graph attaining equality in this bound has $10$ vertices. We prove a stronger form of a 2015 conjecture of Kolokolnikov related to the second eigenvalue of cubic graphs of given order, and observe that other recent results on the second eigenvalue of regular graphs are consequences of the general upper bound theorem on $v(k,\theta)$ mentioned above.
\end{abstract}

\section{Introduction}

Let $k=\lambda_1>\lambda_2\geq \cdots \geq \lambda_n$ be the eigenvalues of the adjacency matrix of a connected $k$-regular graph with $n$ vertices. The difference $k-\lambda_2$ is usually called the \emph{spectral gap} of the graph. Regular graphs with a large spectral gap $k-\lambda_2$ are of great interest as they have good connectivity and expansion properties \cite{A1986,HLW2006,CKY2021}. 

In this paper, we study the maximum order of a $k$-regular connected graph with a given upper bound on the second eigenvalue. Let $v(k,\theta)$ denote the maximum order of a connected $k$-regular graph whose second largest eigenvalue is at most $\theta$. From Alon--Boppana and Serre \cite{A1986,HLW2006,Serre}, we know that $v(k,\theta)$ is finite if $\theta < 2\sqrt{k-1}$, while the work of Marcus, Spielman, and Srivastava \cite{MSS2015} implies that $v(k,\theta)$ is infinite if $\theta\geq 2\sqrt{k-1}$. In \cite{RSS}, Richey, Stover, and Shutty used a quantitative version of Serre's proof from \cite{Serre} to determine $v(k,\theta)$ for several parameters and made some conjectures related to $v(k,\theta)$. Cioab\u{a}, Koolen, Nozaki, and Vermette~\cite{Sebisiamvktheta} settled these conjectures and determined many values of $v(k,\theta)$. The same problem has been studied for bipartite regular graphs \cite{vkthetaBipartite} and regular uniform hypergraphs \cite{CKMNO2022}, see also \cite{CioabaRIMS,FJ2021,Lato, YK2021}.

To describe the results of this paper, we recall the following general result for $v(k, \theta)$ obtained in \cite{Sebisiamvktheta}, as it is the foundation of our analysis. Denote by $\lambda_2(T)$ the second largest eigenvalue of a matrix $T$ and by $\lambda_2(G)$ the second largest eigenvalue of the adjacency matrix of a (regular) graph $G$. Let $T(k,t,c)$ be the $t \times t$ tridiagonal matrix with constant row sum $k$, with lower diagonal $(1, 1, \ldots, 1, c)$ and upper diagonal $(k, k - 1, \ldots, k - 1)$, 
\[
T(k, t, c) =
\begin{bmatrix}
0     & k     &        &        &        &        \\
1     & 0     & k-1    &        &        &        \\
      & 1     & 0      & \ddots &        &        \\
      &       & \ddots & \ddots & k-1    &        \\
      &       &        & 1      & 0      & k-1    \\
      &       &        &        & c      & k-c
\end{bmatrix}.
\]

Using Nozaki's linear programming bound for graphs \cite{NozakiLP}, Cioab\u{a}, Koolen, Nozaki, and Vermette  \cite{Sebisiamvktheta} proved that if $\theta=\lambda_2(T(k, t, c))$, then
\begin{equation} \label{eq:vklambda}
v(k, \theta) \leq M(k,t,c)= 1 + \sum_{i=0}^{t-3} k(k - 1)^i + \frac{k(k - 1)^{t-2}}{c},
\end{equation}
and characterized the equality case. A regular graph attaining the bound \eqref{eq:vklambda} must be distance-regular with intersection array $\{k, k - 1, \ldots, k - 1;\ 1, \ldots, 1, c\}$. 
Damerell and Georgiacodis \cite{damerellmoorepolygon} called 
such distance-regular \emph{Moore polygons} and proved that no Moore polygons exist when the diameter is $6$ or more. However, for smaller diameters, there are infinitely many values of $k$ and $c$ for which Moore polygons exist, see \cite{Sebisiamvktheta}. The classification of Moore polygons remains open. 

In Section \ref{sec:nonex_Moorepolygons}, we contribute to this work and establish the following:
\begin{enumerate}
    \item There is no distance-regular graph with intersection array
    \[
    \{k, k - 1, k - 1;\ 1, 1, k - \sqrt{k}\}
    \]
    for $k\geq  3$, except for $k=4$. In that case, there is a unique distance-regular graph with intersection array $\{4,3,3;1,1,2\}$, namely the Odd graph $O_4$. Note that $\lambda_2(T(k,4,k-\sqrt{k}))=\sqrt{k}$. 
    
    \item There exists no distance-regular graph with intersection array
    \[
    \{k, k - 1, k - 1, k - 1;\ 1, 1, 1, k - \sqrt{2k - 1}\}
    \]
    for $k\geq 3$. Note that $\lambda_2(T(k,5,k-\sqrt{2k-1}))=\sqrt{2k-1}$.
\end{enumerate}

In Section \ref{sec:v5sqrt5min1}, we prove that $v(5, \sqrt{5} - 1) = 16$ and show that the folded $5$-cube\footnote{This graph is obtained from the $5$-dimensional binary cube by identifying antipodal vertices, is the unique strongly regular graph with parameters $(16,5,0,2)$. It is called the Clebsch graph by some authors \cite[p. 226]{GodsilRoyle} while other authors \cite[p.117]{BrouwerHaemers} call it the complement of the Clebsch graph.} is the only $5$-regular graph on $16$ vertices with second largest eigenvalue at most $\sqrt{5}-1$. 

In Section \ref{sec: jumps in the second eigenvalue}, we show that if a connected $5$-regular graph has second eigenvalue $\lambda_2 > 1$, then 
$\lambda_2 \geq \sqrt{5} - 1$. We also prove that the only $5$-regular graph with $\lambda_2 = \sqrt{5} - 1$ is the one in Figure~\ref{fig: n=10}. 

In Section \ref{sec:vksqrt2}, we determine two new values of $v(k, \theta)$: 
\[
v(4, \sqrt{2}) = 14, \quad v(5, \sqrt{2}) = 16.
\]
The co-Heawood graph achieves $v(4, \sqrt{2}) = 14$ and has second eigenvalue $\sqrt{2}$. 
The folded $5$-cube achieves $v(5, \sqrt{2}) = 16$, although its second eigenvalue is $1$.

As a consequence, the tables of $v(k, \theta)$ for $k\in \{4, 5\}$ are updated as follows:

\begin{center}
\begin{tabular}{|c|ccccccccc|}
\hline
 $\theta$    &  $-1$&$0$&$1$&$\sqrt{5}-1$& $\sqrt{2}$& $\sqrt{3}$&$2$&$\sqrt{6}$&3  \\ 
 \hline
 $v(4,\theta)$    & 5&8&12&12&14&26&35&80&728\\
 \hline
\end{tabular}
\end{center}

\begin{center}
\begin{tabular}{|c|cccccccc|}
\hline
 $\theta$    &  $-1$&$0$&$1$&$\sqrt{5}-1$& $\sqrt{2}$&2& $2\sqrt{2}$&$2\sqrt{3}$  \\ 
 \hline
 $v(5,\theta)$    & 6&10&16&16&16&42&170&2730\\
 \hline
\end{tabular} 
\end{center}
We point out that Table 1 in \cite{Sebisiamvktheta} contains typos in $v(4,1)$ and $v(4,\sqrt{5}-1)$ and that Table 2 in \cite{CKMNO2022} provides additional information on $v(k,\theta)$. 

In Section \ref{sec:7}, we obtain a linear programming bound on the order of a $k$-regular graph with given girth, leading to lower bounds on its second eigenvalue. Using this, we reprove the Moore bound and refine the algebraic-connectivity bound given by Exoo, Kolokolnikov, Janssen, and Salamon \cite{EKSJ2023} for known $(k,g)$-cages.

In Section \ref{sec:6}, we use \eqref{eq:vklambda} to prove a conjecture of Kolokolnikov \cite{Kolo15}, and to give an alternative proof of some lower bounds on $\lambda_2$ given in \cite{Balla}. 

\section{The non-existence of some Moore polygons}\label{sec:nonex_Moorepolygons}

The following problem is stated in \cite{Sebisiamvktheta}. 
\begin{problem}[Problem 5.1 in \cite{Sebisiamvktheta}]
    Determine $v(k,\sqrt{k})$ for $k\geq 3$. 
\end{problem}

For $t=4$ and $c=k-\sqrt{k}$, we have that 
\begin{equation}
T(k, 4, k-\sqrt{k}) = \begin{bmatrix}
    0&k&0&0\\
    1&0&k-1&0\\
    0&1&0&k-1\\
    0&0&k-\sqrt{k}&\sqrt{k}
\end{bmatrix}.
\end{equation}
and $\lambda_2(T(k,4,k-\sqrt{k})) = \sqrt{k}$. Using \eqref{eq:vklambda}, we obtain that
\begin{equation}\label{sqrt k}
v(k,\sqrt{k})\leq 2k^2 + k^{3/2}-k-\sqrt{k}+1.
\end{equation}
If a graph attains this bound, then it is a distance-regular graph with the intersection array $\{b_0,b_1,b_2;c_1,c_2,c_3\}=\{k,k-1,k-1;1,1,k-\sqrt{k}\}$. For small $k$, we know the following:
\begin{itemize}
        \item $v(2, \sqrt{2}) = 8$ attained by the cycle $C_8$.
        \item $v(3, \sqrt{3}) = 18$ attained by the Pappus graph.
        \item $v(4, 2) = 35$ attained by the Odd graph $O_4$.
\end{itemize}

If $q$ is a prime power, let $IG(AG(2,q)\setminus pc)$ be the incidence graph of the finite affine plane $AG(2, q)$ minus a parallel class of lines. This is sometimes called a \emph{bi-affine plane}.

The graph $IG(AG(2,5)\setminus pc)$ is a distance-regular graph on $50$ vertices with spectrum $\{5^1, \sqrt{5}^{20}, 0^8, -\sqrt{5}^{20}, -5^1\}$. Therefore, using \eqref{sqrt k}, we get that $50\leq v(5,\sqrt{5})\leq 54$. 

The graph $IG(AG(2, 7)\setminus pc)$ is a distance-regular graph on 98 vertices, and its spectrum is
$\{7^{(1)}, \sqrt{7}^{(42)}, 0^{(12)}, -\sqrt{7}^{(42)}, -7^{(1)}\}$. Using \eqref{sqrt k}, we have that $98\leq v(7,\sqrt{7})\leq 106$. 

The graph $IG(AG(2, 9)\setminus pc)$ is a distance-regular graph on 162 vertices, and its spectrum is
$\{9^{(1)}, 3^{(72)}, 0^{(16)}, -3^{(72)}, -9^{(1)}\}$. Using \eqref{sqrt k}, we have that $162\leq v(9, 3)\leq 178$. 


The cycle graph $C_8$ is $IG(AG(2, 2)\setminus pc)$, and the Pappus graph is $IG(AG(2, 3)\setminus pc)$. Is there a pattern? Certainly, $v(4, 2)=35$ breaks the pattern, but at least we can say that when $k$ is a prime power, we have that
\begin{equation}
2k^2\leq v(k,\sqrt{k})\leq 2k^2 + k^{3/2}-k-\sqrt{k}+1,
\end{equation}
see also Table 1 in \cite{vkthetaBipartite}.

The degree $k$ should be square since the parameter $c_3$ is an integer. The only known graph with these parameters is the Odd graph $O_4$ when $k=4$. 

We will show that for $k\neq 4$, there does not exist such a graph. 

\begin{theorem}\label{thm: nomoorepolygon1}
For $k\geq 3, k\neq 4$, there is no distance-regular graph with the intersection array $\{k,k-1,k-1; 1,1,k-\sqrt{k}\}$. 
\end{theorem}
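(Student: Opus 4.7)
The plan is to use the standard integrality criterion for the multiplicities of a hypothetical distance-regular graph with a prescribed intersection array. Since $c_3 = k - \sqrt{k}$ must be a positive integer, $k$ is forced to be a perfect square — this immediately handles all non-square $k \geq 3$ — so I write $k = s^2$ with $s \geq 2$; the case $s = 2$ corresponds to the Odd graph $O_4$, and the goal is to rule out all $s \geq 3$.

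I would compute the distance polynomials $v_j(x)$ from the three-term recurrence $c_{j+1} v_{j+1}(x) = (x - a_j) v_j(x) - b_{j-1} v_{j-1}(x)$, together with the valencies $k_j = b_0 b_1 \cdots b_{j-1}/(c_1 c_2 \cdots c_j)$ for $j=0,1,2,3$, and then evaluate at the distinguished eigenvalue $\theta = s = \sqrt{k}$. The computation simplifies nicely because $v_2(s) = 0$, and after collecting terms the order is $n = 2s^4 + s^3 - s^2 - s + 1$, while the standard multiplicity formula $m(\theta) = n\big/\sum_{j=0}^{3} v_j(\theta)^2/k_j$ yields
\[
m(s) = \frac{n \cdot s(s-1)}{2s^2 - 2s + 1}.
\]

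The heart of the argument is a short divisibility check. Writing $2s^2 - 2s + 1 = 2s(s-1) + 1$ shows that this quantity is coprime to $s(s-1)$ and is odd, so integrality of $m(s)$ is equivalent to $(2s^2 - 2s + 1) \mid n$. A brief polynomial division gives
\[
2n = (2s^2 + 3s + 1)(2s^2 - 2s + 1) - (3s - 1),
\]
which reduces the condition further to $(2s^2 - 2s + 1) \mid (3s - 1)$. For $s \geq 3$ one has the strict inequalities $0 < 3s - 1 < 2s^2 - 2s + 1$, so the divisibility fails and no such distance-regular graph can exist. The main obstacle is purely computational — carrying out the multiplicity formula and the polynomial manipulation carefully — since, once the closed form for $m(s)$ is in hand, the result drops out of a one-line size inequality.
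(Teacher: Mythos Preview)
Your argument is correct and complete. The verification is routine: with $k=s^2$ the valencies are $k_0=1$, $k_1=s^2$, $k_2=s^2(s^2-1)$, $k_3=s(s-1)(s+1)^2$, and the standard sequence gives $v_0(s)=1$, $v_1(s)=s$, $v_2(s)=0$, $v_3(s)=-(s+1)$, whence $\sum_j v_j(s)^2/k_j=(2s^2-2s+1)/(s(s-1))$ and your multiplicity expression follows. The polynomial identity $2n=(2s^2+3s+1)(2s^2-2s+1)-(3s-1)$ checks out, and the size inequality $0<3s-1<2s^2-2s+1$ for $s\ge 3$ finishes the job. (At $s=2$ one gets $3s-1=5=2s^2-2s+1$, consistent with the existence of $O_4$.)

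Your route is genuinely different from ours. We argue that all eigenvalues of a Moore polygon must be integers (invoking Bannai--Ito), factor the characteristic polynomial as $(x-s)(x^2+s^2x+s^3-s^2-s+1)$, and then show that the discriminant $s^4-4s^3+4s^2+4s-4$ is a perfect square only for $s\in\{1,2\}$ by reducing to an elliptic curve and determining its rational points via Mordell. You instead exploit the integrality of the \emph{multiplicity} of the single eigenvalue $\sqrt{k}$, and the resulting divisibility constraint collapses to a one-line inequality. Your approach is more elementary and self-contained: it avoids both the Bannai--Ito rationality theorem and the elliptic-curve machinery. The advantage of our approach is that the ``all eigenvalues are integers'' paradigm transfers directly to the diameter-$4$ case in Theorem~\ref{thm: nomoorepolygon2}, where no single eigenvalue is as explicit; your multiplicity method would require locating and working with a specific eigenvalue there, which is less straightforward.
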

\begin{proof} If $k=3$, then the intersection array $\{k,k-1,k-1; 1,1,k-\sqrt{k}\}$ contains non-integers and therefore, there is no distance-regular graph with this intersection array.

Let $k>4$. By contradiction, assume that a distance-regular graph $G$ exists with these parameters. 

Let $(F_i^k)_{i\geq 0}$ be the sequence of orthogonal polynomials defined by the three-term recurrence relation:
\begin{equation} \label{eq:F012}
    F_0^k(x) = 1, F_1^k(x) = x, F_2^k(x) = x^2-k,
\end{equation}
and for $i\geq 3$,
\begin{equation} \label{eq:Fk}
    F_i^k(x) = xF_{i-1}^k(x) - (k-1)F_{i-2}^k(x).
\end{equation}
For simplicity we write $F_i(x)$ to denote $F_i^k(x)$. In \cite{Sebisiamvktheta}, the authors showed that the eigenvalues of the tridiagonal matrix $T(k,t,c)$ that are not equal to $k$ coincide with the roots of the polynomial $\sum_{i=0}^{t-2}F_i(x) + F_{t-1}(x)/c.$ 
Therefore, the non-trivial eigenvalues (eigenvalues that are not equal to $k$) of the distance-regular graph $G$ are the roots of the polynomial
\begin{equation}
  F(x)=(k-\sqrt{k})(F_0(x)+F_1(x)+F_2(x))+F_3(x)=(k-\sqrt{k})(1+x+x^2-k)+x^3-(2k-1)x.  
\end{equation}
In \cite{bannaiitospectra}, Bannai and Ito showed that the eigenvalues of a Moore polygon are rational. Therefore, by the Rational Root Theorem, the eigenvalues of the distance-regular graph $G$ are all integers. 

By showing that the roots of the polynomial $F(x)$ are not integers, we establish the non-existence of $G$.

Let $s := \sqrt{k}$. The polynomial $F(x)$ can be rewritten as
\[
	F(x) = (x - s) (x^2 + s^2 x + s^3 - s^2 - s + 1).
\]
The discriminant of the quadratic factor is
\[
	D = s^4 - 4 s^3 + 4 s^2 + 4 s - 4.
\]
As $F(x)$ has only integer roots, the discriminant $D$ must be a perfect square. If $t := \sqrt{D}$, then
\begin{equation}
    s^4 - 4 s^3 + 4 s^2 + 4 s - 4 = t^2.
\end{equation}
Taking $s=x+1, t=y$ and then $y = -x^2+2X+\frac{1}{3}, 2x = \frac{Y-1}{X-1/3}$, we get that
\begin{equation}
    Y^2 = 4X^3-\frac{4}{3}X +\frac{35}{27}.
\end{equation}
This is an elliptic curve, and its set of rational points $(X, Y)$ is $$\bigg\{ \left(\frac{7}{3},-7\right), \left(\frac{4}{3},\pm 3\right),\left(\frac{1}{3},1\right), \left(\frac{-2}{3},\pm 1\right) \bigg\}.$$ The corresponding set of integral points $(s, t)$ is $$\{(-1, \pm 1), (1, \pm 1), (2, \pm 2)\}$$ (see Theorem~2, p.~77 in \cite{ljMordell}). 
Therefore, $k = 1$ or $k = 4$, and the assertion holds. 
\end{proof}
When $k = 1$, the roots of $F(x)$ are $\{-1, 0, 1\}$, and the corresponding graph does not exist. 
When $k = 4$, the roots of $F(x)$ are $\{-3, -1, 2\}$, and the resulting graph is the Odd graph $O_4$. 

Note that in the above case, the second-largest eigenvalue $\sqrt{k}$ is the largest zero of $F_2(x)$. 
We consider the case where the second-largest eigenvalue is the largest zero of $F_i(x)$ for $i\geq 3$. 
For $i=3$, the largest zero of $F_3(x)$ is $\sqrt{2k-1}$. 
From \eqref{eq:vklambda} we have that
\[
v(k,\sqrt{2k-1}) \leq 1+k+k(k-1)+k (k-1)^2+k (k-1)\left(k+\sqrt{2 k-1}\right).  
\]
If a graph $G$ attains this bound, then $G$ is a distance-regular graph with the intersection numbers
\begin{equation}
    \{b_0,b_1,b_2,b_3;c_1,c_2,c_3,c_4\}=\{k,k-1,k-1,k-1;1,1,1,k-\sqrt{2k-1}\}.
\end{equation}
The eigenvalues of $G$ coincide with the zeros of the polynomial
\begin{align}
F(x)&=c_4(F_0(x)+F_1(x)+F_2(x)+F_3(x))+F_4(x) \nonumber\\
&= (x-\sqrt{2k-1})\left(x^3+k x^2+(k-1) (\sqrt{2 k-1}-2) x-k+1 \right) \nonumber\\
&=\frac{1}{2}(x-s)\left(2 x^3+(s^2+1) x^2+(s^2-1)(s-2)x-s^2+1 \right), \label{eq:ff}
\end{align}
where $s=\sqrt{2k-1}$. 
Using \cite{bannaiitospectra} and the Rational Root Theorem, we have that the eigenvalues of $G$ are all integers. 
\begin{theorem}\label{thm: nomoorepolygon2}
For $k\geq 3$, there is no distance-regular graph with the intersection array $(k,k-1,k-1,k-1; 1,1,1, k-\sqrt{2k-1})$. 
\end{theorem}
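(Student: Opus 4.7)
The plan is to assume for contradiction that a distance-regular graph $G$ with the intersection array $\{k, k-1, k-1, k-1;\ 1, 1, 1, k - \sqrt{2k-1}\}$ exists, and to exhibit a root of the eigenvalue polynomial that cannot be an integer. First I would note that integrality of $c_4 = k - \sqrt{2k-1}$ forces $s := \sqrt{2k-1}$ to be a positive integer; since $2k-1$ is odd, $s$ must be odd, and $k \geq 3$ gives $s \geq 3$. Every $k \geq 3$ for which $2k-1$ is not a perfect square is thus ruled out immediately, and only the cases $k = (s^2+1)/2$ with $s$ an odd integer at least $3$ remain.

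Next, by the Bannai--Ito rationality theorem for Moore polygons, every eigenvalue of $G$ is rational, and the Rational Root Theorem applied to the cubic
\[
    g(x) = 2x^3 + (s^2+1)x^2 + (s^2-1)(s-2)x - (s^2-1)
\]
forces any rational root $p/q$ in lowest terms to have $q \mid 2$. A short parity check rules out $q = 2$: substituting $x = p/2$ with $p$ odd and clearing denominators yields $p^3 + (s^2+1)p^2 + 2(s^2-1)(s-2)p - 4(s^2-1) = 0$, whose left-hand side is odd (since $s^2+1$ is even for odd $s$) while the right-hand side is $0$. Hence every rational root of $g$ is in fact an integer, and so, being eigenvalues of $G$, all three roots of $g$ are forced to be integers.

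The main step is then to locate the unique positive real root of $g$ and show it lies strictly in the interval $(0, 1)$. Since $g'(x) = 6x^2 + 2(s^2+1)x + (s^2-1)(s-2)$ has non-negative coefficients for $s \geq 2$, the function $g$ is strictly increasing on $[0, \infty)$ and has at most one positive real root. Computing
\[
    g(0) = -(s^2-1) < 0 \qquad \text{and} \qquad g(1) = 4 + (s^2-1)(s-2) > 0
\]
for $s \geq 2$, the intermediate value theorem places the unique positive real root of $g$ in $(0, 1)$. This root cannot be an integer, hence (by the previous paragraph) it must be irrational, contradicting the fact that the eigenvalues of $G$ are integers.

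I expect the strategic insight to be the main point of the argument: unlike in Theorem \ref{thm: nomoorepolygon1}, where the discriminant of the quadratic factor reduces to an elliptic curve and Mordell's theorem must be invoked, here the diameter-$4$ cubic $g$ has enough coefficient structure that its unique positive real root is squeezed into the integer-free interval $(0, 1)$. All subsequent steps are elementary monotonicity and intermediate-value arguments, and no deep diophantine machinery is needed.
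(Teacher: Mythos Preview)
Your proof is correct and takes a genuinely different route from the paper's. Both arguments start from the Bannai--Ito rationality theorem to reduce to showing that the cubic factor
\[
g(x)=2f(x)=2x^{3}+(s^{2}+1)x^{2}+(s^{2}-1)(s-2)x-(s^{2}-1)
\]
cannot have three integer roots. From there the paper proceeds via Vieta's formulas on the monic cubic $f(x)=x^{3}+kx^{2}+(k-1)(s-2)x-(k-1)$: letting $\alpha_{1},\alpha_{2},\alpha_{3}$ be the roots with $|\alpha_{1}|$ smallest, it combines $\alpha_{1}\alpha_{2}\alpha_{3}=k-1$ with the congruence $f(\alpha_{1})\equiv\alpha_{1}^{3}+\alpha_{1}^{2}\pmod{k-1}$ and the bound $|\alpha_{1}|^{3}<k-1$ to force either $\alpha_{1}=-1$ or $\alpha_{1}^{2}(\alpha_{1}+1)=k-1$ with a specific sign pattern, then eliminates each case by a short direct computation. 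Your approach bypasses all of this: you observe that $g'(x)\geq 0$ on $[0,\infty)$ for $s\geq 2$, so $g$ is strictly increasing there, and $g(0)=-(s^{2}-1)<0<4+(s^{2}-1)(s-2)=g(1)$ pins the unique positive root inside $(0,1)$, which cannot be an integer. This is shorter and avoids case analysis entirely. One minor simplification you could make: since $s$ is odd, $k=(s^{2}+1)/2$ and $k-1=(s^{2}-1)/2$ are already integers, so $f$ is monic with integer coefficients and the Rational Root Theorem applied directly to $f$ gives integrality of rational roots; your parity argument ruling out half-integer roots of $g$ is therefore unnecessary, though not incorrect.
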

\begin{proof}
Let $k\geq 3$. By contradiction, assume that a distance-regular graph $G$ exists with these parameters. Let $s=\sqrt{2k-1}\in \mathbf{N}$. We prove that the polynomial  
    \[
    f(x)=x^3+kx^2+(k-1)(s-2)x-k+1,
    \]
which comes from a factor of $F(x)$ in \eqref{eq:ff}, has a non-integer root.

Let $\alpha_1,\alpha_2,\alpha_3$ be the roots of $f(x)$. 
Assume all $\alpha_i$ are integers and $|\alpha_1|\leq |\alpha_2|\leq |\alpha_3|$. 
The roots $\alpha_1,\alpha_2,\alpha_3$ are all distinct (see \cite[Proof of Theorem~2.3]{Sebisiamvktheta}). 
Therefore, we have $|\alpha_3| \geq |\alpha_1|+1$. 
By Vieta's formula, we have 
\begin{equation} \label{eq:roots_and_coef}
    \alpha_1 \alpha_2 \alpha_3=k-1 \ne 0, \qquad 
    \alpha_1+\alpha_2+\alpha_3=-k.
\end{equation}
    Then, we have
\begin{equation} \label{eq:k-1}
    k-1=|\alpha_1|\cdot | \alpha_2|\cdot |
\alpha_3|\geq |\alpha_1|^2\cdot(|\alpha_1|+1) >|\alpha_1|^3. 
\end{equation}
Moreover, it follows that
\begin{equation} \label{eq:mod}
0=f(\alpha_1)\equiv \alpha_1^3+\alpha_1^2 \pmod{k-1}.    
\end{equation}

If  $\alpha_1<0$, then    
\[
|\alpha_1^3+\alpha_1^2|=|\alpha_1|^2 \cdot |\alpha_1+1|<|\alpha_1|^3 <k-1. 
\]
 This implies $\alpha_1^3+\alpha_1^2=0$ from \eqref{eq:mod}, and $\alpha_1=-1$. 
 From \eqref{eq:roots_and_coef}, the other two roots are 
 \[
 \alpha_2 = \frac{-(k-1) - \sqrt{(k-1)(k+3)}}{2}, \alpha_3 = \frac{-(k-1) + \sqrt{(k-1)(k+3)}}{2}. 
 \]
 Since they are integers, there exists $t\in \mathbb{N}$ such that $t^2=(k-1)(k+3)$. We rewrite this equality as
 \begin{equation}
     (k+1+t)(k+1-t)=4.
 \end{equation}
 This equality has no integer solution $(k,t)\in \mathbb{N} \times \mathbb{N}$. Therefore,  $\alpha_2,\alpha_3$ are not integers. 

 Suppose $\alpha_1>0$ holds. Then $|\alpha_2| \geq \alpha_1$ and $|\alpha_3| \geq \alpha_1+1$. It follows that 
 \[
0<\alpha_1^3+\alpha_1^2=\alpha_1^2(\alpha+1)\leq \alpha_1 \cdot |\alpha_2| \cdot |\alpha_3|=k-1. 
 \]
 This implies $\alpha_1^3+\alpha_1^2=k-1$ from \eqref{eq:mod}, and hence $|\alpha_2|=\alpha_1$ and $|\alpha_3|=\alpha_1+1$. From 
 $\alpha_1\alpha_2\alpha_3=k-1>0$, we have $\alpha_2<0, \alpha_3<0$. Therefore, $\alpha_2=-\alpha_1$ and $\alpha_3=-(\alpha_1+1)$. From \eqref{eq:roots_and_coef}, the root $\alpha_1$ satisfies $\alpha_1(\alpha_1^2+\alpha_1-1)=0$, which has no positive integer solution. 

Therefore, $f(x)$ has non-integer roots, which contradicts the existence of the distance-regular graphs.  
\end{proof}

Using Theorem \ref{thm: nomoorepolygon2} and the result of Damerell and Georgiacodis \cite{damerellmoorepolygon}, we conclude that there are no $k$-regular Moore polygons with second largest eigenvalue $\sqrt{2k-1}$. It is easy to see there is no tridiagonal matrix $T(k, t, c)$ with $\lambda_2(T(k,t,c)) = \sqrt{2k-1}$ for $t\leq 3$. For $t=4$, the only possibility is when $c=0$. The eigenvalues of $T(k, 4, 0)$ are $k, \sqrt{2k-1},0$, and $-\sqrt{2k-1}$. However, $c$ cannot be $0$ as the corresponding distance-regular graph must be connected. The case $t=5$ is done by Theorem \ref{thm: nomoorepolygon2}. For $t=6,$ the second largest eigenvalue of $T(k,t,c)$ is $\sqrt{2k-1}$ for $$c=\frac{(k-1)^2\sqrt{2k-1}}{-k^2+3k-1+\sqrt{2k-1}}<0.$$ Since Damerell and Georgiacodis \cite{damerellmoorepolygon} proved the non-existence of Moore polygon for diameter $d>5$, we can stop at $t=6$.
 
\section{$v(5,\sqrt{5}-1)=16$}\label{sec:v5sqrt5min1}

In this section, we prove $v(5,\sqrt{5}-1)=16$, which is achieved by folded $5$-cube with second eigenvalue $1$.  We also show that the graph shown in Figure \ref{fig: n=10} of order $10$ is the unique $5$-regular graph with second eigenvalue $\sqrt{5}-1$.  

We will use the following result that is due to Tutte.
\begin{lemma}[\cite{Tuttegirth}]\label{girth restriction}
 Let $n(k,g)$ denote the minimum possible number of vertices of a $k$-regular graph with girth (the length of its shortest cycle) $g$. 
 \begin{enumerate}
\item[$(1)$] If $g=2d+1$ is odd, then
\[
n(k,g)\;\ge\; \frac{k(k-1)^{d}-2}{k-2}
\;=\;
1+k\sum_{i=0}^{d-1}(k-1)^i .
\]
\item[$(2)$] If $g=2d$ is even, then
\[
n(k,g)\;\ge\; \frac{2(k-1)^{d}-2}{k-2}
\;=\;
2\sum_{i=0}^{d-1}(k-1)^i .
\]
\end{enumerate}
\end{lemma}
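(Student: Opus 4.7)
The plan is to prove both bounds by the classical breadth-first search (BFS) counting argument that yields the Moore bound. In each case I would root a BFS at a single vertex (odd girth) or at an edge (even girth), show that up to depth roughly $g/2$ the layers are forced to be pairwise disjoint and as large as the tree structure dictates, and then sum a geometric series.

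For $g = 2d+1$, fix any vertex $v$ and let $V_i$ be the set of vertices at distance exactly $i$ from $v$. I would show by induction on $i$ that for $1 \leq i \leq d$ every $w \in V_i$ has a unique neighbor in $V_{i-1}$ and its remaining $k-1$ neighbors lie in $V_i \cup V_{i+1}$ and are distinct from the corresponding neighbors of any other vertex of $V_i$. The reason is purely girth-theoretic: a second back-edge from $w$ to $V_{i-1}$, or a shared neighbor between two vertices of $V_i$, produces a closed walk of length at most $2i+1 \leq 2d+1 = g$ through $v$, and a close inspection shows that in fact the closed walk that arises is strictly shorter than $g$. Consequently $|V_i| = k(k-1)^{i-1}$ for $1 \leq i \leq d$, and summing $|V_0| + |V_1| + \cdots + |V_d|$ gives $1 + k\sum_{i=0}^{d-1}(k-1)^i$, which is the claimed bound.

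For $g = 2d$, I would root the BFS at an edge $uv$ and let $V_i$ denote the set of vertices at distance exactly $i$ from $\{u,v\}$. Then $|V_0| = 2$, and the same inductive counting gives $|V_i| = 2(k-1)^i$ for $1 \leq i \leq d-1$. The one new subtlety is that a vertex reached from $u$ in $i$ BFS steps must not coincide with a vertex reached from $v$ in $i$ BFS steps, because such an identification would close a cycle through the edge $uv$ of length $2i+1 \leq 2d-1 < g$, again contradicted by the girth assumption. Summing $|V_0| + |V_1| + \cdots + |V_{d-1}|$ yields $2\sum_{i=0}^{d-1}(k-1)^i$.

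The only real obstacle is careful bookkeeping of which cycle lengths the girth hypothesis actually excludes: one must track whether a collision at depth $i$ produces a closed walk of length $2i$, $2i+1$, or $2i+2$, and check that the cutoff depth $d$ (respectively $d-1$) keeps all of these strictly below $g$. Once distinctness of the layers is established, the bound is immediate from the geometric series identity $\sum_{i=0}^{d-1}(k-1)^i = \tfrac{(k-1)^d - 1}{k-2}$, which rewrites the two inequalities in the equivalent closed forms stated in the lemma.
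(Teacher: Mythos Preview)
Your argument is correct: it is the classical combinatorial proof of the Moore bound via a breadth-first tree rooted at a vertex (odd girth) or an edge (even girth), and the bookkeeping you outline suffices.

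The paper, however, proves this lemma by a completely different route. It first establishes a linear programming bound: for any polynomial $f(x)=\sum_{i\ge 0} f_i F_i(x)$ with $f(k)>0$, $f(\lambda_j)\ge 0$ for $j\ge 2$, $f_0>0$, and $f_i\le 0$ for $i\ge g$, one has $v\ge f(k)/f_0$. It then plugs in the specific test polynomials
\[
f(x)=\Bigl(\sum_{i=0}^{d} F_i(x)\Bigr)^2 \quad (g=2d+1), \qquad
f(x)=(x+k)\Bigl(\sum_{i=0}^{\lfloor (d-1)/2\rfloor} F_{d-1-2i}(x)\Bigr)^2 \quad (g=2d),
\]
and computes $f(k)/f_0$ using the identity $F_i(k)=k(k-1)^{i-1}$ to recover exactly the Moore bound. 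Your BFS proof is more elementary and self-contained; the paper's LP proof is less direct but serves its larger purpose of showing that the Moore bound is a special case of the spectral/polynomial framework developed there, and it opens the door (as the paper notes) to possible refinements for pairs $(k,g)$ where better test polynomials might exist.
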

\begin{proposition}\label{v(5,sqrt5 -1)}
  $v(5, \sqrt{5}-1) = 16.$  
\end{proposition}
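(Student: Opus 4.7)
The plan is to establish both directions of $v(5, \sqrt{5}-1) = 16$. For the lower bound, the folded $5$-cube is a $5$-regular graph on $16$ vertices with spectrum $\{5, 1^{10}, (-3)^5\}$, so $\lambda_2 = 1 < \sqrt{5}-1$; this gives $v(5, \sqrt{5}-1) \geq 16$. For the upper bound, I will start with the LP bound \eqref{eq:vklambda} applied with $t=3$ and the unique positive $c$ making $\lambda_2(T(5,3,c)) = \sqrt{5}-1$, namely $c = 2 - 1/\sqrt{5}$. This yields
\[
n \leq M(5,3,c) = 6 + \frac{20}{c} = \frac{314 + 20\sqrt{5}}{19} < 19.
\]
Combined with the handshake lemma (which forces $n$ even, since $5n$ must be even), the only case not already subsumed by $n \leq 16$ is $n = 18$.

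To handle $n=18$, I will combine Tutte's Lemma (Lemma~\ref{girth restriction}) with eigenvalue interlacing. Tutte's bound shows that girth $\geq 5$ would force $n \geq 1+5+20 = 26$, so $G$ contains a cycle $C$ of length $3$ or $4$. The natural target is the induced subgraph $H$ consisting of $C$ together with the external (non-$C$) neighbors of each vertex of $C$. In the \emph{generic} case where these external neighbors are pairwise distinct and induce no additional edges, $H$ is a $3$- or $4$-cycle with three pendants attached at each vertex; using the equitable partition with cells $\{v\}$ for $v \in C$ and the pendant set at $v$, a Schur-complement computation gives $\lambda_2(H) = (-1+\sqrt{13})/2 \approx 1.303$ when $|C|=3$ and $\lambda_2(H) = \sqrt{3} \approx 1.732$ when $|C|=4$, both strictly greater than $\sqrt{5}-1$. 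Eigenvalue interlacing then yields $\lambda_2(G) \geq \lambda_2(H) > \sqrt{5}-1$, a contradiction.

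The main obstacle will be the degenerate configurations where some external neighbors coincide or extra edges appear in the second neighborhood of $C$: in those cases the generic $H$ is no longer an induced subgraph, so one must identify tailored induced subgraphs and verify each still forces $\lambda_2 > \sqrt{5}-1$. A cleaner alternative, if the case analysis proves intricate, is to invoke the spectral-gap dichotomy from Section~\ref{sec: jumps in the second eigenvalue}: every connected $5$-regular graph satisfies either $\lambda_2 \leq 1$ or $\lambda_2 \geq \sqrt{5}-1$, with equality at $\sqrt{5}-1$ attained uniquely by a $10$-vertex graph. Combined with $v(5,1) = 16$ from \cite{Sebisiamvktheta}, this would yield $n \leq \max(16,10) = 16$ immediately, bypassing the case analysis altogether.
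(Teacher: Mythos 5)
Your lower bound, LP upper bound $M(5,3,2-1/\sqrt{5})=(314+20\sqrt{5})/19<19$, parity reduction to $n=18$, and the use of Tutte's bound to force girth $3$ or $4$ all match the paper exactly (the paper leaves the parity exclusion of $n=17$ implicit, which you make explicit --- a small improvement). The problem is the case $n=18$, where your argument has a genuine gap in both of its branches. Your primary plan --- interlacing with the induced subgraph consisting of the short cycle $C$ and its external neighbors --- is only verified in the \emph{generic} configuration (distinct external neighbors, no extra edges), where your computed values $(-1+\sqrt{13})/2$ and $\sqrt{3}$ are correct. But the degenerate configurations are not rare pathologies you can wave at: in a girth-$3$ graph two triangle vertices may share external neighbors, and in a girth-$4$ graph edges between external neighbors of adjacent cycle vertices create only $4$-cycles, so they are entirely legal. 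You acknowledge these cases but leave them undone, and there is no a priori guarantee that every such configuration admits an induced subgraph with $\lambda_2>\sqrt{5}-1$; as stated, this is a proof outline, not a proof.

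Your fallback is worse: it is circular. The spectral-gap dichotomy of Section~\ref{sec: jumps in the second eigenvalue} (Proposition~\ref{gap in lambda2}) is proved in the paper by \emph{first} invoking Proposition~\ref{v(5,sqrt5 -1)} to conclude $n\leq 16$, so you cannot use it to establish Proposition~\ref{v(5,sqrt5 -1)}. The paper's actual device for $n=18$ sidesteps your degeneracy problem entirely: instead of induced-subgraph (Cauchy) interlacing, it uses quotient-matrix (Haemers) interlacing on the vertex partition $\bigl\{V(H),\{v\},V(G)\setminus(V(H)\cup\{v\})\bigr\}$, where $v$ is a vertex at distance at least $2$ from the cycle $H$ (such $v$ exists by counting, since $|V(H)\cup\Gamma_1(H)|\leq 16<18$). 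The quotient matrix entries are exact averages forced by $5$-regularity and the non-adjacency of $v$ to $H$, independent of how external neighbors coincide or what edges they span; its second eigenvalue, e.g.\ $\tfrac{1}{26}\bigl(9+\sqrt{601}\bigr)$ in the girth-$4$ case, exceeds $\sqrt{5}-1$, giving the contradiction uniformly. Replacing your induced-subgraph step with this partition argument would close the gap.
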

\begin{proof}
Let $G$ be a $5$-regular graph with $\lambda_2(G)\leq \sqrt{5}-1$ on $n = v(5, \sqrt{5}-1)$ vertices. Consider the following matrix
$$T = \begin{bmatrix}
    0& 5 &0\\
    1&0& 4\\
    0&2-\frac{1}{\sqrt{5}}&3+\frac{1}{\sqrt{5}}
\end{bmatrix}.$$ Its second largest eigenvalue  $\lambda_2(T)= \sqrt{5}-1$.
By \eqref{eq:vklambda}, we get that $v(5,\sqrt{5}-1)\leq M(5,3,2-\frac{1}{\sqrt{5}})=18.8801.$ So $v(5,\sqrt{5}-1)\leq 18$. We know that $v(5,1) =16$ attained only by the folded $5$-cube \cite{Sebisiamvktheta}. Therefore, $16\leq v(5,\sqrt{5}-1)\leq 18$.  

Using Lemma \ref{girth restriction}, we observe that a 5-regular graph with girth more than 4 has at least 26 vertices, so $G$ has girth either 3 or 4. Suppose $G$ has girth 3. Let $H$ be a subgraph of $G$ isomorphic to $C_3$ and let $v\in G$ be not adjacent to any vertex of $H$. Consider the partition of $G$ into the following three parts : $V(H), \{v\}, V(G) \setminus V(H)\cup\{v\}.$ The corresponding quotient matrix for a given order $n$ is 
$$Q_n = \begin{bmatrix}
  2&0&3\\
  0&0&5\\
  \frac{9}{n-4}&\frac{5}{n-4}&\frac{5n-34}{n-4}
\end{bmatrix}.$$
The second largest eigenvalue of this matrix is $\lambda_2(Q_n) = \frac{n-11+\sqrt{n^2-12n+81}}{n-4} > \sqrt{5}-1$ for $n\geq 12$. Since we can always find such a vertex $v\in G$ for $n\geq 14$, this proves that there is no connected 5-regular graph of order $n\geq 14$ that has girth 3 and second largest eigenvalue $\lambda_2\leq \sqrt{5}-1$. To complete the proof, we only need to show that there exists no 5-regular graph on 18 vertices with girth 4 and $\lambda_2 \leq \sqrt{5}-1$. 
 Suppose $G$ has girth 4 and $n=18$. Let $H$ be a subgraph of $G$ isomorphic to $C_4$ and let $v\in G$ be not adjacent to any vertex of $H$. Consider the partition of $G$ in to the following three parts : $V(H), \{v\}, V(G) \setminus V(H)\cup\{v\}.$ The corresponding quotient matrix is
$$Q_G = \begin{bmatrix}
    2&0&3\\
    0&0&5\\
    \frac{12}{13}&\frac{5}{13}&\frac{48}{13}
\end{bmatrix}.$$ 
The second largest eigenvalue of $Q_G$ is $\lambda_2(Q_G)  = \frac{1}{26}(9 + \sqrt{601})>\sqrt{5}-1$. By eigenvalue interlacing, we get $\lambda_2(G) > \sqrt{5}-1,$ a contradiction. 
This completes the proof.
\end{proof}
\begin{figure}[htbp!]
    \centering
\begin{tikzpicture}[scale=1]
\foreach \i in {0,...,9} {
        \coordinate (v\i) at ({\i*36}:2); 
        \coordinate (l\i) at ({\i*36}:2.3);
    }
\foreach \i in {0,...,9} {
        \draw[fill] (v\i) circle (0.05cm);
        \node at (l\i) {\i};
    }
\foreach \i in {0,...,9} {
    \pgfmathtruncatemacro{\j}{mod(\i+1,10)}
    \draw (v\i) -- (v\j);
}
\foreach \i in {0,...,9} {
    \pgfmathtruncatemacro{\j}{mod(\i+2,10)}
    \draw (v\i) -- (v\j);
}
\foreach \i in {0,...,9} {
    \pgfmathtruncatemacro{\j}{mod(\i+5,10)}
    \draw (v\i) -- (v\j);
}
\end{tikzpicture}
    \caption{The Cayley graph $\mathrm{Cay}(\mathbf{Z}_{10},\{1,2,5,8,9\})$.}
   \label{fig: n=10}
\end{figure}
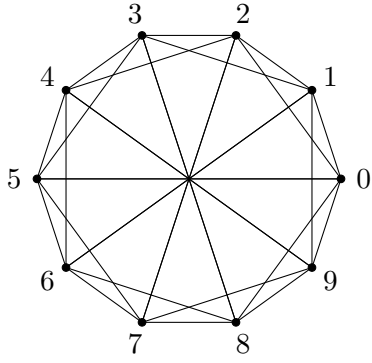

\section{Jumps in the second largest eigenvalue}\label{sec: jumps in the second eigenvalue}
Let $\rho$ be the unique real root of the cubic equation $x^3-x-1=0$ and let $\lambda^* = \rho^{1/2} + \rho^{-1/2}\approx 2.01980$.
\begin{proposition}
    Let $G$ be a $k$-regular graph of order $n$. If $\lambda_2(G)>1$, then $\lambda_2(G)> \lambda^*-1$.
\end{proposition}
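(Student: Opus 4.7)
My plan is to use eigenvalue interlacing driven by the nodal partition induced by a $\lambda_2$-eigenvector. Assume $G$ is $k$-regular with $\lambda_2(G) > 1$, and let $f$ be a unit eigenvector for $\lambda_2$. Set $V_+ = \{v : f(v) > 0\}$ and $V_- = \{v : f(v) < 0\}$; both are non-empty since $f \perp \mathbf{1}$. Restricting the equation $Af = \lambda_2 f$ to $V_+$ and noting that the contribution from neighbors in $V_-$ is non-positive while $f$ is strictly positive on $V_+$, a standard Perron--Frobenius argument yields $\lambda_1(G[V_+]) \geq \lambda_2(G) > 1$, and symmetrically for $G[V_-]$.

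Next I would invoke the following elementary structural fact: any graph $H$ with $\lambda_1(H) > 1$ has a vertex of degree at least $2$, since otherwise $H$ is a disjoint union of edges and isolated vertices with $\lambda_1(H) \leq 1$. In particular, $G[V_+]$ contains $K_{1,2}$ as a subgraph, and an easy case distinction (whether the two leaves are adjacent) produces an induced $P_3$ or $K_3$ in $G[V_+]$, and similarly in $G[V_-]$. Taking these two induced $3$-vertex subgraphs together yields an induced subgraph $H$ of $G$ on $6$ vertices. By Cauchy interlacing $\lambda_2(G) \geq \lambda_2(H)$, so the task reduces to showing $\lambda_2(H) > \lambda^* - 1$ for every possible configuration of cross-edges between the $V_+$-side and the $V_-$-side of $H$.

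The main obstacle is this final case analysis. In the baseline case of no cross-edges we have $H = 2P_3$ with $\lambda_2(H) = \sqrt{2} > \lambda^* - 1 \approx 1.0198$, which is comfortable, but adding cross-edges can decrease $\lambda_2(H)$, and the extremal configuration must be identified. The value $\lambda^* - 1$ is an algebraic integer of degree $6$ (the minimal polynomial of $\lambda^*$ being $x^6 - 5x^4 + 4x^2 - 1$, obtained by eliminating $\rho$ from $(\lambda^*)^2 = \rho + 2 + \rho^{-1}$), which strongly suggests that the extremal $6$-vertex graph $H^*$ is a specific small join whose characteristic polynomial has $\lambda^* - 1$ as a root. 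If no clean extremal $H^*$ emerges, my backup strategy is to combine the interlacing bound with the linear programming bound \eqref{eq:vklambda}: choose $c$ so that $\lambda_2(T(k,3,c)) = \lambda^* - 1$, derive an upper bound on $v(k, \lambda^* - 1)$, and verify that this matches $v(k, 1)$ for every $k$, so that no new graphs enter as $\theta$ is raised from $1$ to $\lambda^* - 1$.
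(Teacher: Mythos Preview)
Your interlacing approach has a genuine gap at the final step. The inequality $\lambda_2(G) \geq \lambda_2(H)$ is valid, but $\lambda_2(H)$ can fall well below $\lambda^* - 1$ once cross-edges appear: in the extreme case two triangles together with all nine cross-edges give $H = K_6$ and $\lambda_2(H) = -1$. For a concrete instance inside the hypothesis, let $G$ be the Cartesian product of $K_6$ with $K_2$ (a $6$-regular graph with $\lambda_2 = 4$); the $\lambda_2$-eigenvector is constant on each copy of $K_6$, and choosing the same three indices on each side produces the triangular prism with $\lambda_2(H) = 1 < \lambda^* - 1$. A different choice of triangles rescues this particular example, but your argument supplies no selection mechanism and no reason why one should always exist. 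The number $\lambda^* - 1$ is tied to Hoffman's limit point and to the boundary of the $\lambda_{\min} \geq -2$ classification, not to any six-vertex extremal configuration, so there is no clean extremal $H^*$ of the kind you are hoping for.

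Your backup is also unsound: the LP bound $M(k,3,c)$ is strictly increasing in $\theta$, so the bounds at $\theta = 1$ and at $\theta = \lambda^* - 1$ never coincide, and even the equality $v(k, \lambda^* - 1) = v(k, 1)$ would only constrain the \emph{maximum} order, not rule out smaller $k$-regular graphs with $\lambda_2$ in the gap $(1,\lambda^*-1]$, which is what the proposition actually asserts. The paper takes an entirely different route: it passes to the complement $\overline{G}$, whose smallest eigenvalue then lies in $(-\lambda^*, -2)$, and invokes the Acharya--Jiang classification of connected graphs with smallest eigenvalue in that range; all such graphs are irregular (either augmented path extensions or one of finitely many sporadic graphs, checked by computer), contradicting the regularity of $\overline{G}$.
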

\begin{proof}

In \cite{acharyajiang}, the authors showed that if a connected graph has its smallest eigenvalue in $(-\lambda^*, -2)$, then either it is isomorphic to an augmented path extension of a rooted graph (see Definition 1.2 in \cite{acharyajiang}) or it is isomorphic to one of the 4752  Maverick graphs provided as enum\_rooted\_graphs.txt file in the same paper. Let $G$ be a connected $k$-regular graph with $\lambda_2(G)\in (1, \lambda^*-1)$. Its complement $\overline{G}$ has $\lambda_{min}(\overline{G})\in (-\lambda^*, -2)$. We note that an augmented path extension of a rooted graph is an irregular graph, and a code\footnote{The code is available at https://github.com/vishalguptaud/Spectral-Moore-Problem/tree/main.} that computes the degree sequences of the Maverick graphs confirms that all of them are irregular as well. Hence, $\overline{G}$ is isomorphic to an irregular graph, a contradiction. To see the strict inequality in the result, we note that $-\lambda^*$ is not totally real and therefore cannot be an eigenvalue of a graph. This completes the proof.
\end{proof}

In \cite{Sebisiamvktheta}, the authors showed that if $G$ is a $4$-regular graph with $\lambda_2(G)>1$, then $\lambda_2(G)\geq \sqrt{5}-1$. We extend their result for $5$-regular graphs and also characterize the unique graph that attains the bound. We will use the following result.
\begin{theorem}[\cite{Koledin}] \label{lambda2 bound}
    Let $G$ be a connected $k$-regular bipartite graph with $n$ vertices and diameter $3$. Then 
    $$n\leq 2\frac{k^2-\lambda^2_2(G)}{k-\lambda^2_2(G)},$$ whenever the right-hand side is positive.
\end{theorem}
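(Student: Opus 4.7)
The plan is to derive this bound from a two-way computation of the trace of $BB^{T}$, where $B$ is the biadjacency matrix of $G$. Fix a bipartition $V(G)=X\sqcup Y$, so that $|X|=|Y|=n/2$, and write
\[
A=\begin{pmatrix}0&B\\B^{T}&0\end{pmatrix}.
\]
The eigenvalues of $A$ come in antisymmetric pairs $\pm\sigma_1,\ldots,\pm\sigma_{n/2}$, where $\sigma_1\geq\sigma_2\geq\cdots\geq\sigma_{n/2}\geq 0$ are the singular values of $B$. Since $G$ is $k$-regular, $B\mathbf{1}_Y=k\mathbf{1}_X$, and so $\sigma_1=k$; consequently $\lambda_2(G)=\sigma_2$ and $\sigma_i^{2}\leq\lambda_2(G)^{2}$ for every $i\geq 2$.

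The second step is to compute $\operatorname{tr}(BB^{T})$ in two ways. Each diagonal entry is $(BB^{T})_{xx}=\sum_{y\in Y}B_{xy}^{2}=\deg(x)=k$, so $\operatorname{tr}(BB^{T})=kn/2$. On the spectral side, $\operatorname{tr}(BB^{T})=\sum_{i=1}^{n/2}\sigma_i^{2}=k^{2}+\sum_{i=2}^{n/2}\sigma_i^{2}$. Bounding the tail by $(n/2-1)\lambda_2^{2}$ gives
\[
\frac{kn}{2}\;\leq\; k^{2}+\Bigl(\frac{n}{2}-1\Bigr)\lambda_2^{2},
\]
which rearranges to $(n/2)(k-\lambda_2^{2})\leq k^{2}-\lambda_2^{2}$. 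Whenever $k-\lambda_2^{2}>0$ (equivalently, whenever the right-hand side of the stated inequality is positive), dividing through yields the desired bound.

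Notably, this derivation never invokes the diameter-$3$ hypothesis; the conclusion holds for every connected $k$-regular bipartite graph. The assumption of diameter exactly $3$ is the natural setting in which the bound is sharp: equality forces $\sigma_i=\lambda_2$ for all $i\geq 2$, so the spectrum of $A$ becomes $\{k,\lambda_2^{(n/2-1)},(-\lambda_2)^{(n/2-1)},-k\}$, characterizing the bipartite distance-regular graphs of diameter at most $3$ (for example the Heawood graph and other incidence graphs of symmetric $2$-designs). The proof itself is essentially a one-line trace estimate, so no serious obstacle is expected; the only conceptual step is recognizing that bipartiteness lets us replace the analysis of $A$ by the cleaner analysis of $BB^{T}$.
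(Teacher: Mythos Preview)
Your proof is correct. The paper does not give its own proof of this statement; it simply cites Koledin and Stani\'c \cite{Koledin} and uses the inequality as a black box. Your argument via $\operatorname{tr}(BB^{T})$ (equivalently, $\tfrac{1}{2}\operatorname{tr}(A^{2})$) is the standard trace computation and is entirely sound: regularity gives $\operatorname{tr}(BB^{T})=kn/2$, bipartiteness gives the symmetric spectrum $\{\pm\sigma_i\}$ with $\sigma_1=k$ and $\sigma_2=\lambda_2(G)$, and bounding the remaining $\sigma_i^{2}$ by $\lambda_2^{2}$ yields the inequality after dividing by $k-\lambda_2^{2}>0$.

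Your observation that the diameter-$3$ hypothesis is never used is also correct; the inequality holds for every connected $k$-regular bipartite graph with $\lambda_2^{2}<k$. The diameter-$3$ assumption in \cite{Koledin} reflects the context in which the bound is applied (and in which equality can be analyzed), not a logical requirement for the inequality itself. Indeed $K_{k,k}$, which has diameter~$2$, already meets the bound with equality.
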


\begin{proposition}\label{gap in lambda2}
If $G$ is a connected 5-regular graph of order $n$ with $\lambda_2(G)>1,$ then $\lambda_2(G)\geq \sqrt{5}-1,$ with equality attained only by the Cayley graph of $(\mathbf{Z}_{10},+)$ with the generating set $\{1,2,5,8,9\}$ as shown in Figure \ref{fig: n=10}. 
\end{proposition}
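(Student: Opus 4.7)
The plan is to combine Proposition~\ref{v(5,sqrt5 -1)} with a small-case analysis. Arguing by contradiction, I would suppose that either $\lambda_2(G) \in (1,\sqrt{5}-1)$, or $\lambda_2(G) = \sqrt{5}-1$ but $G$ is not the Cayley graph of Figure~\ref{fig: n=10}. Either way $\lambda_2(G) \leq \sqrt{5}-1$, so Proposition~\ref{v(5,sqrt5 -1)} forces $n \leq 16$, and since $G$ is $5$-regular, $n$ is even. Thus $n \in \{6, 8, 10, 12, 14, 16\}$, and Lemma~\ref{girth restriction} further restricts the girth of $G$ to $\{3, 4\}$ (since a $5$-regular graph with girth at least $5$ has at least $26$ vertices).

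Next, I would eliminate the larger orders by reusing the quotient-matrix technique of Proposition~\ref{v(5,sqrt5 -1)}. In the girth-$3$ case, the closed neighborhood of a triangle $H$ satisfies $|N[H]| \leq 12$, so for $n \geq 13$ one can pick a vertex $v \notin N[H]$; the resulting quotient matrix on $(V(H), \{v\}, V(G) \setminus (V(H) \cup \{v\}))$ has second eigenvalue $\frac{n-11+\sqrt{n^2-12n+81}}{n-4}$, which exceeds $\sqrt{5}-1$ for every $n \geq 12$. Eigenvalue interlacing then contradicts $\lambda_2(G) \leq \sqrt{5}-1$, eliminating $n \in \{14, 16\}$ with girth $3$. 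An entirely analogous $C_4$ quotient matrix handles girth $4$ for $n \in \{14, 16\}$, with a small check needed in the degenerate case when every vertex lies in $N[C_4]$; this is easily resolved by choosing a different $C_4$ or via a direct local counting argument.

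For the remaining orders $n \in \{6, 8, 10, 12\}$, I would enumerate all connected $5$-regular graphs using standard software such as \texttt{nauty/geng} and compute their second eigenvalues. The expected outcome is that the unique graph of these orders with $\lambda_2(G) \in (1, \sqrt{5}-1]$ is $\mathrm{Cay}(\mathbf{Z}_{10}, \{1, 2, 5, 8, 9\})$. For this circulant, a direct calculation using $\lambda_j = 2\cos(2\pi j/10) + 2\cos(4\pi j/10) + (-1)^j$ yields the spectrum $\{5, (\sqrt{5}-1)^{(2)}, 0^{(4)}, -1, -(\sqrt{5}+1)^{(2)}\}$, confirming $\lambda_2 = \sqrt{5}-1$ exactly and closing both the lower-bound and uniqueness portions of the proposition.

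The main obstacle is the enumeration step at $n = 12$, where the number of non-isomorphic connected $5$-regular graphs is in the hundreds; this is straightforward by computer but somewhat unsatisfying. A cleaner combinatorial proof, perhaps by extending the quotient-matrix inequality to cover all $n \leq 12$ through a more refined choice of partition, or by exploiting known classifications of $5$-regular graphs of small order and low girth, would avoid the enumeration but seems to require substantial additional work.
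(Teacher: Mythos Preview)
Your girth-$3$ reduction and the proposed enumeration for $n\in\{6,8,10,12\}$ are sound in principle (the latter is heavier than what the paper does at $n=12$, where a case analysis replaces the full search over roughly $7800$ graphs, but it would work). The genuine gap is your treatment of girth~$4$ at $n\in\{14,16\}$. The ``entirely analogous $C_4$ quotient matrix'' does \emph{not} give $\lambda_2(Q)>\sqrt5-1$ there: with parts $V(C_4)$, $\{v\}$, and the rest, one computes
\[
\lambda_2(Q_{14})=\tfrac{10}{9}\approx1.111,\qquad
\lambda_2(Q_{16})=\tfrac{5+\sqrt{465}}{22}\approx1.207,
\]
both strictly below $\sqrt5-1\approx1.236$. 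So interlacing yields nothing, and the ``small check'' you anticipate (for when no $v\notin N[C_4]$ exists) is not the issue---the quotient itself is too weak. Falling back on enumeration is also not viable here: the counts of connected $5$-regular graphs on $14$ and $16$ vertices are in the millions and billions, respectively.

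The paper closes this girth-$4$ gap by splitting into bipartite and non-bipartite cases. In the bipartite case it uses complement arguments at $n=12,14$ and Theorem~\ref{lambda2 bound} at $n=16$ (a $5$-regular bipartite graph on $16$ vertices has diameter~$3$, forcing $\lambda_2\ge\sqrt{15/7}>\sqrt5-1$). In the non-bipartite case it invokes the Koledin--Stani\'c classification: the folded $5$-cube is the unique non-bipartite $5$-regular triangle-free graph with $\lambda_2<\sqrt2$, and it has $\lambda_2=1$. You will need either these ingredients or some substitute of comparable strength; the three-part quotient matrix alone cannot separate $\sqrt5-1$ from the girth-$4$ graphs on $14$ and $16$ vertices.
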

\begin{proof}
Suppose $G$ is a $5$-regular graph with $1< \lambda_2(G)\leq \sqrt{5}-1$ on $n$ vertices. By Proposition~\ref{v(5,sqrt5 -1)}, we get $n\leq 16.$ Suppose $n=8$. We note that the complement of a $5$-regular graph on $8$ vertices is a $2$-regular graph on $8$ vertices. Therefore, $\overline{G}$ must be one of the following graphs $C_3\cup C_5$, $C_4 \cup C_4$, or $C_8$. The smallest eigenvalue of these graphs is $\frac{1}{2}(-1-\sqrt{5}), -2$, and $-2$, respectively. Consequently, the second largest eigenvalue of their complements can only be $\frac{1}{2}(\sqrt{5}-1)$ or $1$. This proves that $n$ cannot be $8$.

Using Lemma \ref{girth restriction}, we observe that a 5-regular graph with girth more than 4 has at least 26 vertices, so $G$ has girth either 3 or 4. Suppose $G$ has girth 3. Let $H$ be a subgraph of $G$ isomorphic to $C_3$ and let $v\in G$ be non-adjacent to any vertex of $H$. Consider the partition of $G$ in to the following three parts : $V(H), \{v\}, V(G) \setminus V(H)\cup\{v\}.$ The corresponding quotient matrix for given order $n$ is 
$$Q_n = \begin{bmatrix}
  2&0&3\\
  0&0&5\\
  \frac{9}{n-4}&\frac{5}{n-4}&\frac{5n-34}{n-4}
\end{bmatrix}.$$
The second largest eigenvalue $\lambda_2(Q_n) = \frac{n-11+\sqrt{n^2-12n+81}}{n-4} > \sqrt{5}-1$ for $n\geq 12.$ Since we can always find such a vertex $v\in G$ for $n\geq 14,$ this proves that there is no connected 5-regular graph $G$ of order $n\geq 14$ that has girth 3 and $\lambda_2(G)\leq \sqrt{5}-1$. Suppose $n=12$ and $G$ does not have such a vertex $v$. This implies that for any subgraph $H\cong C_3$ of $G$ with $V(H) = \{a,b,c\}$, the set of remaining three neighbors of the vertices $a,b,c$ are mutually disjoint. Let $N'(i) = \{i_1, i_2, i_3\}$ be the set of remaining three neighbors of $i$ in $G$, where $i\in\{a, b, c\}.$  
\begin{itemize}
    \item[] Case 1: Suppose for some $i\in V(H)$, the induced subgraph $G[N'(i)]$ has two or more edges. Let $i = a$ and $a_1\sim a_2$ and $a_2\sim a_3$. Then $G[\{a, a_1, a_2\}]\cong C_3$ and $a_3$ is a common neighbor of $a$ and $a_2$, which is a contradiction. 
    \item[] Case 2: Suppose for some $i\in V(H)$, $G[N'(i)]$ has exactly one edge. Let $i = a$ and $a_1\sim a_2$. This means for $i\in \{b,c\}$, $G[N'(i)]$ must have at most one edge. Subcase 1: Suppose $a_1$ is adjacent to all the vertices in $N'(b)$. This means $a_2$ must be adjacent to all the vertices of $N'(c)$ because vertices $a, a_1, a_2$ induce a subgraph isomorphic $H' \cong C_3$ in $G$ and same as above, the vertices $a, a_1, a_2$ must not have any common neighbors in $G\setminus H'$. Note that $G[N'(b)], G[N'(c)]$ have no edges as otherwise, say if $b_1\sim b_2$, then we get a $G[\{b, b_1, b_2\}]\cong C_3$ and $a_1$ is a common neighbor of $b_1$ and $b_2$,  a contradiction. Suppose $a_3$ is adjacent to $c_1$ and all the vertices of $N'(b)$. The vertex $c_1$ must be adjacent to two vertices in $N'(b)$. Let $c_1\sim b_1$ and $c_1\sim b_2$. Then $G[\{a_3, b_1, c_1\}]\cong C_3$ and $b_2$ is a common neighbor of $a_3, c_1$, a contradiction. Next, suppose $a_3$ is adjacent to $b_1, b_2, c_1, c_2$. Then $b_3\sim c_i$ and $c_3\sim b_i$ for $i\in \{1,2,3\}$. The only subgraph $G\setminus H$ that meets the given restrictions is shown in Figure \ref{case 2 subcase 1 subgraph}. In that case, the second largest eigenvalue $\lambda_2(G)=1$.  
    \begin{figure}[h]
        \centering
\begin{tikzpicture}[scale=1]

\foreach \name/\x/\y in {
    v1/0/2, v2/0/1, v3/0/0, v4/2/0, v5/2/1, v6/2/2, v7/1/3, v8/-1.5/-0.6, v9/3.5/-0.6}
    {
        \coordinate (\name) at (\x,\y);
        \draw[fill] (\name) circle[radius=0.05cm];
    }
\foreach \u/\v in {
    v4/v1, v4/v2, v4/v3, v7/v1, v7/v2, v7/v5, v7/v6, v6/v2, v6/v3, v5/v1, v5/v3, v8/v1, v8/v2, v8/v3, v9/v4, v9/v5, v9/v6, v8/v9}
    {
        \draw (\u) -- (\v);
    }
    
\node[anchor=east] at (v1) {$b_1$};
\node[anchor=east] at (v2) {$b_2$};
\node[anchor=east] at (v3) {$b_3$};
\node[anchor=west] at (v4) {$c_3$};
\node[anchor=west] at (v5) {$c_2$};
\node[anchor=west] at (v6) {$c_1$};
\node[anchor=south] at (v7) {$a_3$};
\node[anchor=east] at (v8) {$a_1$};
\node[anchor=west] at (v9) {$a_2$};
\end{tikzpicture}
\caption{Subgraph $G\setminus H$ for Subcase 1 in Case 2.}
\label{case 2 subcase 1 subgraph}
\end{figure}
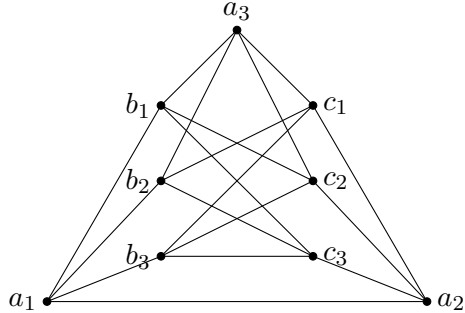
Subcase 2: Suppose $a_1$ is adjacent to $b_1, b_2, c_1$ and $a_2$ is adjacent to $b_3, c_2, c_3$. We first observe that $b_1\not\sim b_2$ and $c_2\not\sim c_3$.  If they were, it would result in a cycle $C_3$ with two endpoints sharing a common neighbor, which would lead to a contradiction. Suppose $a_3\sim b_1, b_2, b_3$. Depending on whether $a_3\sim c_1$ or $a_3\sim c_2$, the only possibilities (up to isomorphism of the resulting graphs) that satisfy the given conditions are $I_1, I_2$ as shown in Figure \ref{case 2 subcase 2 subgraph}. Next, if $a_3\sim b_1, b_2$, then based on the neighbors of $a_3$ in $N'(c)$, the possible induced subgraphs (up to isomorphism) are $I_3, I_4$ as shown in Figure \ref{case 2 subcase 2 subgraph}. Finally, if $a_3\sim b_2, b_3, c_1, c_2$, the possible induced subgraphs (up to isomorphism) that meet the given restrictions are $I_5, I_6$, and $I_7$ as shown in Figure \ref{case 2 subcase 2 subgraph}. We calculate the second largest eigenvalue for each graph in this subcase that contains one of these seven induced subgraphs. Notably, this eigenvalue equals $1$ for all seven graphs.
 \begin{figure}[h]
        \centering
\begin{tikzpicture}[scale=1]
\foreach \name/\x/\y in {
    v1/0/2, v2/0/1, v3/0/0, v4/2/0, v5/2/1, v6/2/2, v7/1/3
    }
    {
    \coordinate (\name) at (\x,\y);
    \draw[fill] (\name) circle[radius=0.05cm];
    }\foreach \u/\v in {
     v7/v1, v7/v2, v7/v3, v7/v6, v5/v6, v5/v1, v5/v2, v6/v3, v4/v1, v4/v2, v4/v3}
    {
        \draw (\u) -- (\v);
    }
\node[anchor=east] at (v1) {$b_1$};    
\node[anchor=east] at (v2) {$b_2$};
\node[anchor=east] at (v3) {$b_3$};
\node[anchor=west] at (v4) {$c_3$};
\node[anchor=west] at (v5) {$c_2$};
\node[anchor=west] at (v6) {$c_1$};
\node[anchor=south] at (v7) {$a_3$};
\node[anchor=north] at (1,-0.3) {$I_1$};
\end{tikzpicture}
\hspace{0.5cm}
\begin{tikzpicture}[scale=1]
\foreach \name/\x/\y in {
    v1/0/2, v2/0/1, v3/0/0, v4/2/0, v5/2/1, v6/2/2, v7/1/3
    }
    {
        \coordinate (\name) at (\x,\y);
        \draw[fill] (\name) circle[radius=0.05cm];
    }
\foreach \u/\v in {
     v7/v1, v7/v2, v7/v3, v7/v5, v5/v6, v4/v1, v4/v2, v4/v3, v5/v1, v6/v2, v6/v3}
    {
        \draw (\u) -- (\v);
    }
\node[anchor=east] at (v1) {$b_1$};
\node[anchor=east] at (v2) {$b_2$};
\node[anchor=east] at (v3) {$b_3$};
\node[anchor=west] at (v4) {$c_3$};
\node[anchor=west] at (v5) {$c_2$};
\node[anchor=west] at (v6) {$c_1$};
\node[anchor=south] at (v7) {$a_3$};
\node[anchor=north] at (1,-0.3) {$I_2$};
\end{tikzpicture}
\hspace{0.5cm}
\begin{tikzpicture}[scale=1]
\foreach \name/\x/\y in {
    v1/0/2, v2/0/1, v3/0/0, v4/1.5/0, v5/2/1, v6/2/2, v7/1/3
    }
    {
        \coordinate (\name) at (\x,\y);
        \draw[fill] (\name) circle[radius=0.05cm];
    }
\foreach \u/\v in {
     v7/v1, v7/v2, v7/v6, v7/v5, v6/v4, v4/v1, v4/v2, v2/v3, v5/v1, v5/v3, v3/v6}
    {
        \draw (\u) -- (\v);
    }    
\node[anchor=east] at (v1) {$b_1$};
\node[anchor=east] at (v2) {$b_2$};
\node[anchor=east] at (v3) {$b_3$};
\node[anchor=west] at (v4) {$c_3$};
\node[anchor=west] at (v5) {$c_2$};
\node[anchor=west] at (v6) {$c_1$};
\node[anchor=south] at (v7) {$a_3$};
\node[anchor=north] at (1,-0.3) {$I_3$};
\end{tikzpicture}
\hspace{0.5cm}
\begin{tikzpicture}[scale=1]
\foreach \name/\x/\y in {
    v1/0/2, v2/0/1, v3/0/0, v4/1.5/0, v5/2/1, v6/2/2, v7/1/3
    }
    {
        \coordinate (\name) at (\x,\y);
        \draw[fill] (\name) circle[radius=0.05cm];
    }
\foreach \u/\v in {
     v7/v1, v7/v2, v7/v4, v7/v5, v4/v6, v4/v2, v5/v1, v5/v3,v6/v1, v6/v3, v2/v3}
    {
        \draw (\u) -- (\v);
    }    
\node[anchor=east] at (v1) {$b_1$};
\node[anchor=east] at (v2) {$b_2$};
\node[anchor=east] at (v3) {$b_3$};
\node[anchor=west] at (v4) {$c_3$};
\node[anchor=west] at (v5) {$c_2$};
\node[anchor=west] at (v6) {$c_1$};
\node[anchor=south] at (v7) {$a_3$};
\node[anchor=north] at (1,-0.3) {$I_4$};
\end{tikzpicture}
\hspace{0.5cm}
\begin{tikzpicture}[scale=1]
\foreach \name/\x/\y in {
    v1/0/2, v2/0/1, v3/0/0, v4/2/0, v5/2/1, v6/2/2, v7/1/3
    }
    {
        \coordinate (\name) at (\x,\y);
        \draw[fill] (\name) circle[radius=0.05cm];
    }
\foreach \u/\v in { v7/v2, v7/v3, v7/v5, v7/v6, v6/v1, v6/v3, v5/v1, v5/v2, v4/v1, v4/v2, v4/v3}
    {
        \draw (\u) -- (\v);
    }    
\node[anchor=east] at (v1) {$b_1$};
\node[anchor=east] at (v2) {$b_2$};
\node[anchor=east] at (v3) {$b_3$};
\node[anchor=west] at (v4) {$c_3$};
\node[anchor=west] at (v5) {$c_2$};
\node[anchor=west] at (v6) {$c_1$};
\node[anchor=south] at (v7) {$a_3$};
\node[anchor=north] at (1,-0.3) {$I_5$};
\end{tikzpicture}
\hspace{0.5cm}
\begin{tikzpicture}[scale=1]
\foreach \name/\x/\y in {
    v1/0/2, v2/0/1, v3/0.5/0, v4/1.5/0, v5/2/1, v6/2/2, v7/1/3
    }
    {
        \coordinate (\name) at (\x,\y);
        \draw[fill] (\name) circle[radius=0.05cm];
    }
\foreach \u/\v in { v7/v2, v7/v3, v7/v5, v7/v6, v5/v1, v5/v2, v6/v4, v1/v3, v1/v4, v3/v6, v4/v2}
    {
        \draw (\u) -- (\v);
    }    
\node[anchor=east] at (v1) {$b_1$};
\node[anchor=east] at (v2) {$b_2$};
\node[anchor=east] at (v3) {$b_3$};
\node[anchor=west] at (v4) {$c_3$};
\node[anchor=west] at (v5) {$c_2$};
\node[anchor=west] at (v6) {$c_1$};
\node[anchor=south] at (v7) {$a_3$};
\node[anchor=north] at (1,-0.3) {$I_6$};
\end{tikzpicture}
\begin{tikzpicture}[scale=1]
\foreach \name/\x/\y in {
    v1/0/2, v2/0/1, v3/0.5/0, v4/1.5/0, v5/2/1, v6/2/2, v7/1/3
    }
    {
        \coordinate (\name) at (\x,\y);
        \draw[fill] (\name) circle[radius=0.05cm];
    }
\foreach \u/\v in {v7/v2, v7/v3, v7/v5, v7/v6, v5/v1, v5/v2, v6/v4, v1/v3, v2/v4, v1/v6, v3/v4}
    {
        \draw (\u) -- (\v);
    }    
\node[anchor=east] at (v1) {$b_1$};
\node[anchor=east] at (v2) {$b_2$};
\node[anchor=east] at (v3) {$b_3$};
\node[anchor=west] at (v4) {$c_3$};
\node[anchor=west] at (v5) {$c_2$};
\node[anchor=west] at (v6) {$c_1$};
\node[anchor=south] at (v7) {$a_3$};
\node[anchor=north] at (1,-0.3) {$I_7$};
\end{tikzpicture}
\caption{Induced subgraphs for Subcase 2 in Case 2.}
\label{case 2 subcase 2 subgraph}
\end{figure}
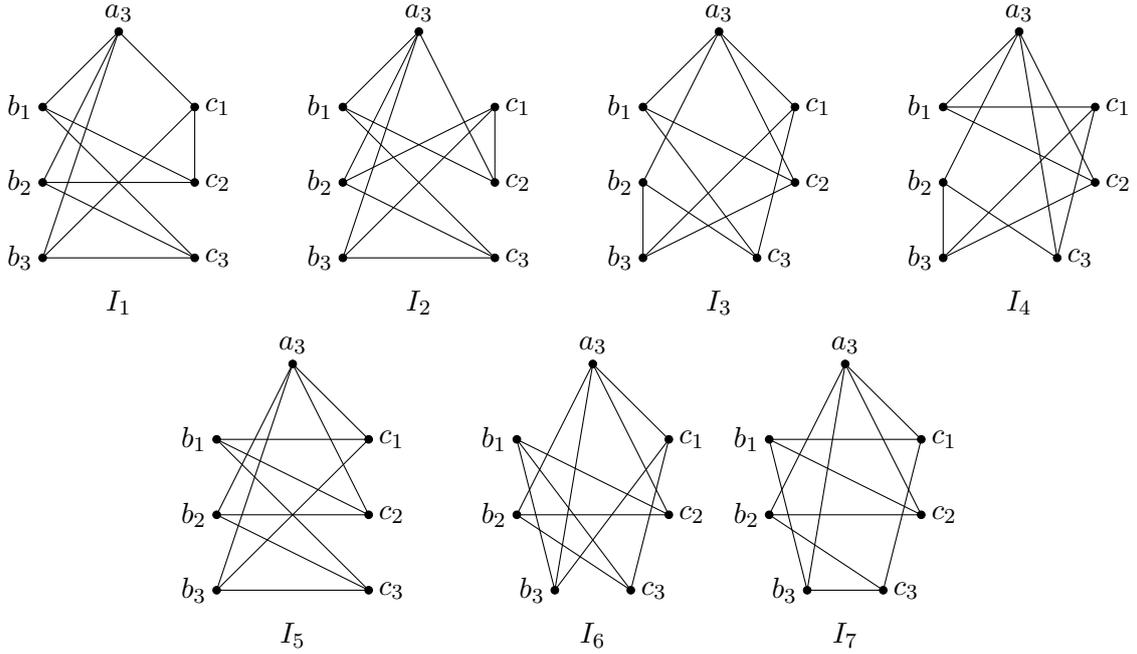
    \item[] Case 3: Suppose $G[N'(i)] \cong  3K_1$ for all $i\in V(H)$. Therefore, the subgraph induced by $V(G)\setminus V(H)$ is a $4$-regular $3$-partite graph with partite sets $N'(i), i\in V(H).$ 
    Subcase 1: Suppose $a_1\in N'(a)$ is adjacent to all the vertices in $N'(b)$ and $c_1$ from $N'(c)$. Since $c_1$ has degree $4$ in $G[V(G)\setminus V(H)]$, it must be adjacent to at least one vertex from $N'(b).$ WLOG, let $c_1\sim b_1$. This gives us another $C_3$ in $G$ with vertices $a_1, b_1, c_1.$ Same as above, the vertices $a_1, b_1, c_1$ must not have any common neighbors. Therefore, $c_1$ must be adjacent to $a_2$ and  $a_3$, and $b_1$ must be adjacent to $c_2$ and $c_3$. Thus, the remaining vertices $\{a_2, a_3, b_2, b_3, c_2, c_3\}$ induces a $3$-regular $3$-partite graph. The only subgraph that meets the given restrictions is shown in  Figure \ref{case 3 subgraph}. Therefore, we can compute the second largest eigenvalue of $G$, which is $\lambda_2(G) = 1$.
\begin{figure}[h]
        \centering
\begin{tikzpicture}[scale=2]

\foreach \name/\x/\y in {
    v1/0/1, v5/1.5/1, v3/0/0, v4/1.5/0, v2/0.5/0.5, v6/1/0.5}
    {
        \coordinate (\name) at (\x,\y);
        \draw[fill] (\name) circle[radius=0.03cm];
    }
\foreach \u/\v in {
    v1/v2, v1/v5, v3/v2, v4/v5, v3/v4, v2/v6, v4/v6, v1/v3, v6/v5}
    {
        \draw (\u) -- (\v);
    }
    
\node[anchor=east] at (v1) {$c_2$};
\node[anchor=north] at (v2) {$b_2$};
\node[anchor=east] at (v3) {$a_2$};
\node[anchor=west] at (v4) {$b_3$};
\node[anchor=west] at (v5) {$a_3$};
\node[anchor=north] at (v6) {$c_3$};

\end{tikzpicture}
\caption{The $3$-regular $3$-partite induced subgraph of $G$ in Case 3.}
\label{case 3 subgraph}
\end{figure}
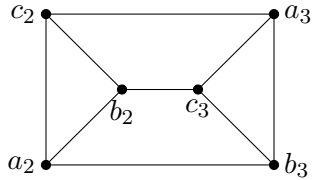
Subcase 2: Suppose $a_1$ is adjacent to two vertices each of the partite sets $N'(b)$ and $N'(c)$. Let $a_1\sim x$, for all $x\in \{b_1, b_2, c_1, c_2\}.$ Suppose $c_1$ is adjacent to all the vertices in $N'(a)$, then we fall into Subcase 1. Suppose $c_1$ is adjacent to all the vertices in $N'(b)$, then $G[a_1,b_1,c_1]\cong C_3$, and $a_1$, $c_1$ have a common neighbor $b_2$. Therefore, $c_1$ can not be adjacent to all the vertices in $N'(a)$ or $N'(b)$; as such, $c_1$ must be adjacent to exactly two vertices in $N'(b).$ Therefore, $c_1$ must be adjacent to at least one of the vertices $b_1, b_2$.  Let $c_1\sim b_1$. This gives us another $C_3$ in $G$ with vertices $a_1, b_1, c_1$. This means $b_1$ must be adjacent to $a_2, c_3$, and $c_1$ must be adjacent to $a_3, b_3$. Note that if $b_2\sim c_2$, then $G[a_1, b_2, c_2]\cong C_3$. The induced subgraph $G[\{a, b_1, c_1\}]$ has an edge, and hence we fall into Case 2. Therefore, $b_2\not\sim c_2$. Since $b_2$ has degree $4$ in $G[V(G)\setminus V(H)]$ and $b_2$ can not be adjacent to a vertex in $\{b_1, b_3, c_1, c_2\}$, we get that $b_2$ must be adjacent to $a_2, a_3$, and $c_3$. Hence, $b_2$ is adjacent to all the vertices in $N'(a)$ and so we fall into Subcase 1. 

\end{itemize}
The remaining case is when $G$ has $n=10$ vertices and girth $3$. We hang the graph by some vertex $v\in V(G)$ and observe that since $G$ is $5$-regular, its diameter equals $2$. There are $59$\footnote{The complete list is available at https://github.com/vishalguptaud/Spectral-Moore-Problem/tree/main.} non-isomorphic graphs of this type \cite{Meringer}. 
We computed the second largest eigenvalue\footnote{The code is available at https://github.com/vishalguptaud/Spectral-Moore-Problem/tree/main.} of each of these $59$ graphs and only one graph, as shown in Figure \ref{fig: n=10}, has the second largest eigenvalue in the interval $(1, \sqrt{5}-1]$ with its second largest eigenvalue being exactly $\sqrt{5}-1$. 

Next, suppose $G$ has girth $4$. For $n=10,$ let $v\in G$ and $N(v)$ be the set of vertices adjacent to $v$ in $G$. Since the girth is 4, $G[N(v)]$ is a coclique. Therefore, the only $5$-regular graph on $10$ vertices that has girth $4$ is $K_{5,5}$ and we know that $\lambda_2(K_{5,5}) = 0.$ For $n\in\{12,14,16\}$, we divide our analysis into the following two cases.

\noindent Case 1: Suppose $G$ is bipartite. For $n=12$, the complement $\overline{G}$ of $G$ is the graph we get after adding a perfect matching between two cliques of order $6$.
$$A(\overline{G}) = \begin{bmatrix}
    J-I&I\\
    I&J-I
\end{bmatrix}.$$
The smallest eigenvalue $\lambda_{min}(\overline{G}) = -2$ with corresponding eigenvector $X= (x,-x)$, where $x$ is a unit vector of length $6$ perpendicular to all-ones vector. Therefore $\lambda_2(G) = -1-\lambda_{min}(\overline{G}) = 1.$ For $n=14$, the complement $\overline{G}$ of $G$ is the graph we get after adding a $2$-regular graph between two cliques of order $7$. The possibilities are $C_4\cup C_{10}, C_4\cup C_4\cup C_6, C_6\cup C_8,$ and $ C_{14}$. Their respective smallest eigenvalues  are  $-2.888, -3, -2.925,$ $ -2.802$. Therefore, $\lambda_2(G)= -1-\lambda_{min}(\overline{G}) > \sqrt{5}-1$ in any case. 
For $n=16,$ since $G$ is $5$-regular and bipartite, any two vertices from the same part have a common neighbor and any two vertices from different parts are either adjacent or have a path of length 3 connecting them. Thus, the diameter of $G$ is 3. Suppose $\lambda^2_2(G)< 5$ (if $\lambda^2_2(G)\geq 5$, there is nothing to prove), then by Theorem  \ref{lambda2 bound}, we get $\lambda_2(G)\geq \sqrt\frac{15}{7}>\sqrt{5}-1$. 

\noindent Case 2: Suppose $G$ is not bipartite. Using \cite[Theorem 2]{KS}, we get that the folded $5$-cube is the only non-bipartite $5$-regular triangle-free graph with the second largest eigenvalue less than $\sqrt{2}$. The folded $5$-cube is a strongly regular graph with spectrum $\{5^1, 1^{10}, -3^{5}\}.$ Therefore, there is no $5$-regular non-bipartite graph with girth 4 and the second largest eigenvalue satisfying $1<\lambda_2\leq\sqrt{5}-1.$ This finishes the proof.
\end{proof}

\section{$v(k,\sqrt{2})$}\label{sec:vksqrt2}

In this section, we show that $v(4,\sqrt{2})=14$ and $v(5,\sqrt{2})=16$. 

Consider the following matrix
\begin{equation}
T = \begin{bmatrix}
    0& k &0\\
    1&0& k-1\\
    0&c&k-c
\end{bmatrix}.
\end{equation} 
When $c = (k-2)(\sqrt{2}-1)$, the second largest eigenvalue of $T$, $\lambda_2(T)=\sqrt{2}$.  By the bound \eqref{eq:vklambda}, we get that 
$$v(k,\sqrt{2})\leq M\left(k,3, (k-2)(\sqrt{2}-1)\right)= 1 + k + \frac{k(k-1)}{(k-2)(\sqrt{2}-1)} = \frac{(2+\sqrt{2})k(k-1)-2}{k-2}.$$
Therefore,
\begin{equation}
    v(k,\sqrt{2})\leq \frac{(2+\sqrt{2})k(k-1)-2}{k-2}.
\end{equation}
Denote $\frac{(2+\sqrt{2})k(k-1)-2}{k-2}$ by $N_k$ and we compute the value for small values of $k$. Note that $v(k,\sqrt{2})\leq N_k = \frac{(2+\sqrt{2})k(k-1)-2}{k-2} = (2+\sqrt{2})k + o(k)$.
\begin{table}[h]
\begin{center}
    \begin{tabular}{|c|c|c|c|c|c|c|c|}
    \hline
         $k$ & 4&5&6&7&8&9&10  \\
         \hline
        $\lfloor N_k \rfloor$ &19& 22&25&28&31&34&38 \\
        \hline
    \end{tabular}
    \caption{An upper bound on $v(k,\sqrt{2}).$}
    \label{table 1}
\end{center}
\end{table}

Let $g=5$, then by Lemma \ref{girth restriction}, $n(k,5)\geq \frac{k(k-1)^2-2}{k-2}\geq N_k$ for all $k\geq 3+\sqrt{2}$. Let $g = 6,$ then by Lemma \ref{girth restriction}, $n(k,5)\geq \frac{2(k-1)^3-2}{k-2}\geq N_k$ for all $k\geq 2+\sqrt{2}.$ Since the lower bound in Lemma \ref{girth restriction} increases with $g$, we obtain $n(k,g) > N_k$ for all $k\geq 5$ and $g\geq 5.$ Therefore, for $k\geq 5$, a $k$-regular graph on $n\leq v(k,\sqrt{2})$ vertices has girth at most 4.

Let $G$ be a graph and $H$ be a subgraph of $G$. For $i \geq 0$, we define $\Gamma_i(H)$ as the set of vertices in $G$ at distance exactly $i$ from $V(H)$, and $\Gamma_{\geq i}(H)$ as the set of vertices in $G$ at least at distance $i$ from $V(H)$.

\subsection{$v(4,\sqrt{2})$}
\begin{proposition}
    $v(4,\sqrt{2})=14.$
\end{proposition}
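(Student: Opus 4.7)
The plan is to combine the LP bound \eqref{eq:vklambda} with a girth-based case analysis, using eigenvalue interlacing through three-part quotient matrices. With $(k,t,c)=(4,3,2(\sqrt{2}-1))$ one computes $M(4,3,2(\sqrt{2}-1))=11+6\sqrt{2}<20$, so $v(4,\sqrt{2})\leq 19$. Since the co-Heawood graph is $4$-regular on $14$ vertices with spectrum $\{4,(\sqrt{2})^{6},(-\sqrt{2})^{6},-4\}$, it suffices to exclude each $n\in\{15,16,17,18,19\}$ for a putative $4$-regular graph $G$ with $\lambda_2(G)\leq\sqrt{2}$.

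I would split the argument by the girth $g$ of $G$. By Lemma~\ref{girth restriction}, $g\geq 6$ forces $n\geq 26$, so $g\in\{3,4,5\}$. The case $g=5$ is disposed of by noting that Lemma~\ref{girth restriction} gives $n\geq 17$, while the unique $(4,5)$-cage is the Robertson graph on $19$ vertices; its second eigenvalue is known to exceed $\sqrt{2}$, so no order in $\{17,18,19\}$ is realizable at girth $5$.

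For $g=3$, I would fix a triangle $H\subseteq G$ and observe that either $n\leq 3+3(k-2)=9$ (if every vertex of $G$ lies within distance $1$ of $V(H)$) or there is a vertex $v$ with $d(v,V(H))\geq 2$. In the latter case the partition $\{V(H),\{v\},V(G)\setminus(V(H)\cup\{v\})\}$ has quotient matrix
\[
Q_n^{(3)} = \begin{pmatrix} 2 & 0 & 2 \\ 0 & 0 & 4 \\ \frac{6}{n-4} & \frac{4}{n-4} & \frac{4n-26}{n-4} \end{pmatrix},
\]
whose non-trivial eigenvalues are the roots of a quadratic, giving $\lambda_2(Q_n^{(3)})=\frac{(2n-18)+\sqrt{(2n-18)^{2}+32(n-4)}}{2(n-4)}$; a direct check shows this exceeds $\sqrt{2}$ for every $n\geq 15$, and interlacing then contradicts $\lambda_2(G)\leq\sqrt{2}$. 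For $g=4$ the same strategy applied to $H\cong C_4$ yields an analogous quotient and disposes of $n\in\{16,17,18,19\}$; the degenerate sub-case with no far vertex forces $n\leq 4+4(k-2)=12$.

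The principal obstacle is the boundary case $g=4$, $n=15$, where the analogous quotient evaluates to $\lambda_2=\frac{2+2\sqrt{6}}{5}\approx 1.38$, which falls just below $\sqrt{2}$ and is therefore inconclusive. Because $n=15$ is odd, $G$ cannot be regular bipartite, so Theorem~\ref{lambda2 bound} is not available either. To settle this case I would either refine the interlacing (for example, by partitioning with two vertex-disjoint induced $4$-cycles, or with the ball of radius $2$ around a vertex whose local structure is constrained by girth $4$) or, mirroring the $n=10$ argument in Proposition~\ref{gap in lambda2}, enumerate the finitely many $4$-regular graphs on $15$ vertices of girth $4$ by computer and verify that none has $\lambda_2\leq\sqrt{2}$.
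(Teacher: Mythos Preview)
Your framework matches the paper's: same LP bound giving $v(4,\sqrt2)\le 19$, same girth split into $g\in\{3,4,5\}$, and the identical triangle/$C_4$ three-part quotient for $g=3$ and for $g=4$ with $n\ge 16$. The two points of divergence are the acknowledged gap at $(g,n)=(4,15)$ and the treatment of $g=5$.

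For $n=15$, $g=4$, the paper does not resort to enumeration; the fix is the mild interlacing refinement you anticipated. Since $|\Gamma_1(H)|\le 8$ for a $4$-cycle $H$, one has $|\Gamma_{\ge 2}(H)|\ge 3$, so pick \emph{two} far vertices $u,v$ and use the partition $\{V(H),\,V(G)\setminus(V(H)\cup\{u,v\}),\,\{u,v\}\}$. The resulting $3\times 3$ quotient depends only on whether $u\sim v$, and in both cases its second eigenvalue exceeds $\sqrt{2}$ (the paper obtains $\tfrac{13+\sqrt{241}}{18}$ and $\tfrac{1+\sqrt{145}}{9}$, respectively). No analysis of two disjoint $4$-cycles or radius-$2$ balls is needed.

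For $g=5$, your argument via the cage (no $4$-regular girth-$5$ graph on $17$ or $18$ vertices since $n(4,5)=19$; uniqueness of the Robertson cage on $19$ vertices; its second eigenvalue exceeds $\sqrt 2$) is logically sound but imports two external facts that you should cite or verify explicitly rather than declare ``known''. The paper instead stays entirely within interlacing: it takes a $5$-cycle $H$, uses $\{V(H),\Gamma_1(H),\Gamma_{\ge 2}(H)\}$ (here $|\Gamma_1(H)|=10$ by girth), runs through the finitely many possible edge-counts inside $\Gamma_{\ge 2}(H)$ for each $n\in\{17,18,19\}$, and in the one leftover sub-case ($n=17$ with $\Gamma_{\ge 2}(H)\cong 2K_1$) passes to a $4$-part partition around a single far vertex and its neighbourhood. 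This is longer but self-contained and keeps the girth-$5$ analysis uniform with the rest; your route is shorter if the Robertson-graph facts are taken as given.
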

\begin{proof}
The co-Heawood graph is a $4$-regular bipartite graph on 14 vertices with spectrum $\{4^1, \sqrt{2}^6, -\sqrt{2}^6, -4^1\}$. Hence, from Table \ref{table 1}, we get that $14\leq v(4,\sqrt{2})\leq 19$.

Let $G$ be a connected $4$-regular graph on $n=v(4,\sqrt{2})$ vertices such that $\lambda_2(G)\leq \sqrt{2}$. Using Lemma \ref{girth restriction}, we observe that a $4$-regular graph with girth six or more has at least $26$ vertices. Because $n\leq 19$, the girth of $G$ can be at most $5$.

Suppose $G$ has girth 3. Let $H$ be a subgraph of $G$ isomorphic to $C_3$. Because $n\geq 14$, there is a vertex $v$ that is not adjacent to any vertex of $H$. Consider the partition of $G$ with the following three parts : $V(H), \{v\}, V(G) \setminus V(H)\cup\{v\}.$ The corresponding quotient matrix is
\begin{equation}
Q_n = \begin{bmatrix}
    2&0&2\\
    0&0&4\\
    \frac{6}{n-4}&\frac{4}{n-4}&\frac{4n-26}{n-4}
\end{bmatrix}.
\end{equation}
We compute the second largest eigenvalue $\lambda_2(Q_n) = \frac{n-9+\sqrt{n^2-10n+49}}{n-4}$ and note that $\lambda_2(Q_n)>\sqrt{2}$ because $n\geq 14$. By eigenvalue interlacing, we get that $\lambda_2(G)\geq \lambda_2(Q_n)>\sqrt{2}$, a contradiction. Therefore, there are no connected 4-regular graphs $G$ of order $n\geq 14$ having girth 3 and $\lambda_2(G)\leq \sqrt{2}$.

Suppose the girth of $G$ is 4. Let $H$ be a subgraph of $G$ isomorphic to $C_4$. Because $n\geq 14$, there exists a vertex $v$ that is not adjacent to any vertex of $H$. Consider the partition of $G$ with the following three parts : $V(H), \{v\}, V(G) \setminus V(H)\cup\{v\}$. The corresponding quotient matrix is
\begin{equation}
Q_n = \begin{bmatrix}
    2&0&2\\
    0&0&4\\
    \frac{8}{n-5}&\frac{4}{n-5}&\frac{4n-32}{n-5}
\end{bmatrix}.
\end{equation}
The second largest eigenvalue $\lambda_2(Q_n) = \frac{n-11+\sqrt{n^2-14n+81}}{n-5}> \sqrt{2}$ if $n\geq 16$. By eigenvalue interlacing, we obtain that $\lambda_2(G)\geq \lambda_2(Q_n)> \sqrt{2}$, a contradiction. Hence, there are no connected 4-regular graphs $G$ of order $n\geq 16$ with girth 4 and $\lambda_2(G)\leq \sqrt{2}$. 

To complete this part of the proof, we need to show that there are no $4$-regular graphs $G$ of girth 4 on 15 vertices with $\lambda_2(G)\leq \sqrt{2}$. By contradiction, assume that $G$ is a $4$-regular graph on $15$ vertices with girth $4$ and $\lambda_2(G)\leq \sqrt{2}$. Let $H$ be a subgraph of $G$ isomorphic to $C_4$. 
Because $|\Gamma_1(H)|\leq 8$, we get that $|\Gamma_{\geq 2}(H)|\geq 15-12=3$.

Let $u,v\in\Gamma_{\geq 2}(H)$. This means that $u$ and $v$ are not adjacent to any vertex of $H$. Consider the following partition of $G$: $V(H), V(G)\setminus V(H)\cup \{u,v\}, \{u,v\}$. The corresponding quotient matrix is 
\begin{equation}
Q = \begin{bmatrix}
    2&2&0\\
    \frac{8}{9}&4-\alpha-\frac{8}{9} & \alpha\\
    0& 4-\beta & \beta
\end{bmatrix}.
\end{equation}


Depending on whether $u\sim v$ or not, the value of $(\alpha,\beta)$ is $(\frac{2}{3},1)$ or $ (\frac{8}{9},0)$, respectively. The second largest eigenvalue $\lambda_2(Q)$ is $\frac{13+\sqrt{241}}{18}> 1.58$ and $\frac{1+\sqrt{145}}{9}> 1.44$, respectively. By eigenvalue interlacing, we obtain $\lambda_2(G)\geq \lambda_2(Q)>\sqrt{2}$, a contradiction. Hence, there is no connected 4-regular graph $G$ of order $15$ that has girth 4 and $\lambda_2(G)\leq \sqrt{2}$.

Suppose $G$ has girth 5. Let $H$ be a subgraph of $G$ isomorphic to $C_5$.  Since girth is 5, $|\Gamma_1(H)|=10$. Consider the partition of $G$: $V(H), \Gamma_1(H), \Gamma_{\geq 2}(H)$. The corresponding quotient matrix for a given order $n$ is 
$$Q = \begin{bmatrix}
    2&2&0\\
    1&3-\alpha&\alpha\\
    0&4-\beta&\beta
\end{bmatrix}.$$
By Lemma \ref{girth restriction}, a $4$-regular graph of girth 5 has order at least 17. So if $n=17$, then $|\Gamma_{\geq 2}(H)| =2$. Suppose there is an edge in $ \Gamma_{\geq 2}(H)$, then $\alpha = \frac{3}{5}, \,  \beta = 1,   \text{ and } \lambda_2(Q) =\frac{1}{10}(7 + \sqrt{69})>1.53.$ If $n=18$, then $|\Gamma_{\geq 2}(H)| = 3$. Depending on if there are no edges, or one edge, or two edges in  $\Gamma_{\geq 2}(H)$, the value of $(\alpha, \beta)$ is $(\frac{6}{5},0), (1,\frac{2}{3}), (\frac{4}{5},\frac{4}{3}),$ respectively.
We calculate the second largest eigenvalue in each case and observe that
$$\lambda_2(Q)> \begin{cases}
1.45, \text{ if } (\alpha, \beta) = (\frac{6}{5},0),\\
1.53, \text{ if } (\alpha, \beta) = (1, \frac{2}{3}), \text{ and }\\
1.69, \text{ if } (\alpha, \beta) = (\frac{4}{5},\frac{4}{3}).    
\end{cases}
$$
If $n= 19$, then $|\Gamma_{\geq 2}(H)| = 4$. Depending on if there are no edges, or one edge, or two edges, or three edges in  $\Gamma_{\geq 2}(H)$, the value of $(\alpha, \beta)$ is $(\frac{8}{5},0), (\frac{7}{5},\frac{1}{2}), (\frac{6}{5}, 1),$ and $(1,\frac{3}{2}),$ respectively.
We calculate the second largest eigenvalue in each case and observe that
$$\lambda_2(Q)> \begin{cases}
1.51, \text{ if } (\alpha, \beta) = (\frac{8}{5},0),\\
1.56, \text{ if } (\alpha, \beta) = (\frac{7}{5},\frac{1}{2}),\\
1.64, \text{ if } (\alpha, \beta) = (\frac{6}{5},1), \text{ and }\\
1.78
, \text{ if } (\alpha, \beta) = (1, \frac{3}{2}).    
\end{cases}
$$
For the remaining case from above when $n=17$ and $G[\Gamma_{\geq 2}(H)] = 2K_1$, let $\Gamma_{\geq 2}(H) =\{ u,v\}.$ Consider the partition of $G$ in to the following four parts : $V(H), V(G)\setminus V(H)\cup \{u\}\cup N(u),$ $N(u), \{u\}.$ Since the girth is 5, $G[N(u)]$ is a coclique on 4 vertices. Thus, the corresponding quotient matrix is 

$$Q = \begin{bmatrix}
    2&\frac{6}{5}&\frac{4}{5}&0\\
    \frac{6}{7}&2&\frac{8}{7}&0\\
    1&2&0&1\\
    0&0&4&0
\end{bmatrix}.$$ The second largest eigenvalue $\lambda_2(Q)> 1.423>\sqrt{2}.$ Therefore by eigenvalue interlacing, we have that $\lambda_2(G)>\sqrt{2}.$ Hence, there is no connected 4-regular graph $G$ of order $n\leq 19$ that has girth 5 and $\lambda_2(G)\leq \sqrt{2}$. This completes the proof.
\end{proof}

\subsection{$v(5,\sqrt{2})$}
\begin{proposition}
 $v(5,\sqrt{2}) = 16.$    
\end{proposition}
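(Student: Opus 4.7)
The plan is to follow the proof template used for $v(4, \sqrt{2}) = 14$. The folded $5$-cube is a $5$-regular graph on $16$ vertices with spectrum $\{5^1, 1^{10}, (-3)^5\}$, so $v(5, \sqrt{2}) \geq 16$, and Table~\ref{table 1} yields $v(5, \sqrt{2}) \leq 22$. I will argue by contradiction that no $n \in \{17, \ldots, 22\}$ is achievable: suppose $G$ is a connected $5$-regular graph on $n$ vertices with $17 \leq n \leq 22$ and $\lambda_2(G) \leq \sqrt{2}$. By Lemma~\ref{girth restriction}, a $5$-regular graph of girth at least $5$ has at least $26$ vertices, so the girth of $G$ is $3$ or $4$.

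If the girth is $3$, I take a triangle $H$ and a vertex $v \notin V(H) \cup N(V(H))$, which exists because $|V(H) \cup N(V(H))| \leq 12 < n$, and use the three-block partition $\{V(H), \{v\}, V(G) \setminus (V(H) \cup \{v\})\}$ already appearing in the proof of Proposition~\ref{gap in lambda2}. Its quotient matrix has second eigenvalue $\frac{(n-11) + \sqrt{n^2 - 12n + 81}}{n-4}$, which a direct check confirms exceeds $\sqrt{2}$ for every $n \geq 17$; interlacing then contradicts $\lambda_2(G) \leq \sqrt{2}$.

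If the girth is $4$, I separate the bipartite and non-bipartite sub-cases. When $G$ is bipartite, $n$ must be even, $G \neq K_{5,5}$, and a counting argument forces the diameter to equal $3$; Theorem~\ref{lambda2 bound} then gives $n \leq 2(25-2)/(5-2) = 46/3$, contradicting $n \geq 18$. When $G$ is non-bipartite, hence triangle-free, I apply the result of Koolen and Stanić (reference \cite{KS} used in the proof of Proposition~\ref{gap in lambda2}), which identifies the folded $5$-cube as the unique non-bipartite $5$-regular triangle-free graph with $\lambda_2 < \sqrt{2}$. Since the folded $5$-cube has only $16$ vertices, every remaining $G$ must satisfy $\lambda_2(G) = \sqrt{2}$ exactly.

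The principal obstacle is this boundary: ruling out $\lambda_2(G) = \sqrt{2}$ for a non-bipartite triangle-free $5$-regular graph on $17 \leq n \leq 22$ vertices. The case $n = 22$ is excluded because equality in Nozaki's bound would require a distance-regular graph with intersection array $\{5, 4;\, 1, 3(\sqrt{2}-1)\}$, whose entry $c_2$ is irrational. For the remaining $n$ I plan to refine the basic $C_4$-plus-distant-vertex partition in the spirit of the $n = 15$ analysis of $v(4,\sqrt{2}) = 14$: pick $H \cong C_4$ and two (or, when $n \geq 19$, three) vertices of $\Gamma_{\geq 2}(H)$, which is nonempty because $|V(H) \cup N(V(H))| \leq 16$, and split by the adjacencies among them. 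Each sub-case produces a small quotient matrix whose second eigenvalue I expect to exceed $\sqrt{2}$; and for the few residual sub-cases where this argument falls short, I will fall back on a computer-assisted enumeration of all $5$-regular triangle-free non-bipartite graphs on the relevant order and verify directly that none attains $\lambda_2 = \sqrt{2}$.
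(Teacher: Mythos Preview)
Your girth-$3$ analysis matches the paper. For girth $4$, the paper takes a shorter route that sidesteps your ``principal obstacle'' entirely, and your detour through \cite{KS} and Theorem~\ref{lambda2 bound} introduces two avoidable difficulties.

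First, a parity point you omit: a $5$-regular graph has even order, so only $n\in\{18,20,22\}$ need be ruled out.

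Second, the paper does \emph{not} split girth $4$ into bipartite and non-bipartite. Invoking \cite{KS} is precisely what manufactures the $\lambda_2=\sqrt{2}$ boundary problem; the paper instead runs the $C_4$-plus-distant-vertices partition (your own fallback plan) from the start, and it handles $\lambda_2\le\sqrt{2}$, equality included, uniformly. With $H\cong C_4$ one has $|\Gamma_{\ge2}(H)|\ge n-16\ge 2$. Partitioning $\{V(H),\{u,v\},\text{rest}\}$ for $u,v\in\Gamma_{\ge2}(H)$: if $u\sim v$ the quotient already gives $\lambda_2>\sqrt{2}$ for all $n\ge10$, so $\Gamma_{\ge2}(H)$ is a coclique; with $u\not\sim v$ the same partition gives $\lambda_2>\sqrt{2}$ for $n\ge20$. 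For $n=18$ one refines once more: if $|\Gamma_{\ge2}(H)|\ge3$ use three coclique vertices; if $|\Gamma_{\ge2}(H)|=2$ pass to a four-part partition isolating two vertices of $\Gamma_1(H)$ non-adjacent to both $u,v$. Every quotient eigenvalue computed is \emph{strictly} above $\sqrt{2}$, so no computer search is needed.

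Separately, your bipartite argument has a genuine gap: Theorem~\ref{lambda2 bound} as stated requires diameter $3$, and the pigeonhole ``counting argument'' for that fails when $n\in\{20,22\}$, since two vertices in a part of size $10$ or $11$ need not share a neighbour among their $5+5$ neighbours. The desired conclusion $n\le 46/3$ does follow from the trace inequality $5n=\operatorname{tr}(A^2)\le 2\cdot25+2(n-2)$, which needs no diameter hypothesis; but as noted, the paper bypasses this sub-case altogether.
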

\begin{proof}
    By proposition \ref{v(5,sqrt5 -1)}, we know that $v(5,\sqrt{5}-1)=16$. Hence, from Table \ref{table 1} we get that $16\leq v(5,\sqrt{2})\leq 22.$ 
    
Let $G$ be a connected $5$-regular graph on $n=v(5,\sqrt{2})$ vertices such that $\lambda_2(G)\leq \sqrt{2}$. Suppose $G$ has girth three. Let $H$ be a subgraph of $G$ isomorphic to $C_3$. Because $n\geq 16$, there is a vertex $v$ that is not adjacent to any vertex of $H$. Consider the partition of $G$ with the following three parts : $V(H), \{v\}, V(G) \setminus V(H)\cup\{v\}.$ The corresponding quotient matrix is
    $$Q_n = \begin{bmatrix}
        2&0&3\\
        0&0&5\\
        \frac{9}{n-4}&\frac{5}{n-4}&\frac{5n-34}{n-4}
    \end{bmatrix}.$$
    The second largest eigenvalue $\lambda_2(Q_n) = \frac{n-11 +\sqrt{n^2-12n+81}}{n-4} > \sqrt{2}$ for $n> 13 + 2\sqrt{2} \approx 15.83.$ By eigenvalue interlacing, we obtain that $\lambda_2(G)>\sqrt{2}$ for $n\geq 16$. Therefore, there is no connected $5$-regular graph $G$ of order $n\geq 16$ that has girth 3 and $\lambda_2(G)\leq \sqrt{2}$.
    
   \noindent Next, suppose $G$ has girth four and $n\geq 18$. Let $H$ be a subgraph of $G$ isomorphic to $C_4$. Because $n\geq 18$, we have $|\Gamma_{\geq 2}(H)|\geq 2$. Let $u,v\in \Gamma_{\geq 2}(H).$ Consider the partition of $G$ with the following three parts :  $V(H), \{u,v\}, V(G)\setminus V(H)\cup\{u,v\}.$ The corresponding quotient matrix is
    $$ Q_n = \begin{bmatrix}2&0&3\\
    0&\alpha&5-\alpha\\
    \frac{12}{n-6}& \frac{2(5-\alpha)}{n-6} & 5 - \frac{22-2\alpha}{n-6}
    \end{bmatrix}.
    $$
    Suppose $\alpha = 1$, i.e., $u\sim v$. We compute the second largest eigenvalue of $Q_n$, $\lambda_2(Q_n) = \frac{3n-38 + \sqrt{n^2-20n+484}}{2(n-6)}$ which is greater than $\sqrt{2}$ for all $n\geq 10.$ By eigenvalue interlacing, we obtain $\lambda_2(G)>\sqrt{2}$. This implies $G[\Gamma_{\geq 2}(H)]$ must be a coclique. So $\alpha  = 0$. The second largest eigenvalue $\lambda_2(Q_n) = \frac{n-17+\sqrt{n^2-14n+169}}{n-6} > \sqrt{2}$ for $n>18+\sqrt{2}\approx 19.41.$ By eigenvalue interlacing, we obtain $\lambda_2(G)>\sqrt{2}$ for $n\geq 20$. Suppose $n=18$ and $|\Gamma_{\geq 2}(H)|\geq 3.$ Let $u,v,w\in \Gamma_{\geq 2}(H).$ Consider the partition of $G$ into the following three parts : $V(H), \{u,v,w\}, V(G)\setminus V(H)\cup\{u,v,w\}.$ The Corresponding quotient matrix is
    $$Q = \begin{bmatrix}
        2&0&3\\
        0&0&5\\
        \frac{12}{11}&\frac{15}{11}&\frac{28}{11}
        
    \end{bmatrix}.$$ 
    The second largest eigenvalue $\lambda_2(Q) = \frac{\sqrt{1345}-5}{22}>1.43$. Hence, by eigenvalue interlacing $\lambda_2(G)>\sqrt{2}$. The remaining case for $n=18$ is when $\Gamma_{\geq 2}(H) = \{u,v\}$ and $u\not\sim v$. 
    Since $G$ is $5$-regular and $|\Gamma_1(H)|=12$, there exists at least two vertices in $\Gamma_1(H)$ (say $w_1, w_2$) that are not adjacent to both $u$ and $v$. Consider the partition of $G$ into the following four parts : $V(H), \{w_1,w_2\}, \{u,v\}, V(G)\setminus V(H)\cup\{w_1,w_2,u,v\}.$ The corresponding quotient matrix is
    $$Q = \begin{bmatrix}
2&\frac{1}{2}&0&\frac{5}{2}\\
1&\alpha&0&4-\alpha\\
0&0&0&5\\
1&\frac{4-\alpha}{5}&1&\frac{11+\alpha}{5}
    \end{bmatrix}.$$
    Depending on whether $w_1\sim w_2$ or not, we have $\alpha = 1$, $\alpha = 0$, respectively.
    We calculate the second largest eigenvalue in each case and observe that
    $$\lambda_2(Q)>\begin{cases}
        1.47, \text{ if } \alpha =1, \\
        1.419, \text{ otherwise.}
    \end{cases}$$
    By eigenvalue interlacing, we obtain $\lambda_2(G)>\sqrt{2}$. Therefore, there is no connected $5$-regular graph $G$ of order $n\geq 18$ that has girth 4 and $\lambda_2(G)\leq \sqrt{2}$. This completes the proof.
\end{proof}

\section{Lower bound on second eigenvalue for given girth} \label{sec:7}

In this section, we provide the linear programming bound on the order of a $k$-regular graph for a given girth, which provides an alternative proof of the Moore bound (Lemma \ref{girth restriction}).  
From this result, we present an alternative proof of Theorem~1 (c), (d) in \cite{EKSJ2023}, which concerns upper bounds on algebraic connectivity (the Laplacian second eigenvalue).  
Moreover, the bound can be refined for those pairs $(k,g)$ where the corresponding $(k,g)$-cage is known.

Let $F_k(x)$ be the polynomials defined in \eqref{eq:F012} and \eqref{eq:Fk}.  The following result gives the LP bound for a fixed girth.

\begin{theorem}
Let $G$ be a connected $k$-regular graph of girth $g$ with distinct eigenvalues $k=\lambda_1>\lambda_2>\cdots> \lambda_r$.  
Let $f(x)$ be a polynomial that can be expressed as $f(x)=\sum_{i\ge0}f_iF_i(x)$ with $f_i\in\mathbb{R}$.  
Suppose that  
\begin{enumerate}
\item[$(1)$] $f(k)>0$ and $f(\lambda_i)\ge0$ for any $i \geq 2$;  
\item[$(2)$] $f_0>0$ and $f_i\le0$ for any $i\ge g$.  
\end{enumerate}
Then the order $v$ of $G$ satisfies  
\[
v\ge\frac{f(k)}{f_0}.
\]
\end{theorem}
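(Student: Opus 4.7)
The plan is to compute $\mathrm{tr}(f(A))$ in two different ways and combine the resulting bounds.

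The key preliminary step is to identify $F_i(A)$ combinatorially: for a $k$-regular graph, $F_i(A)_{xy}$ equals the number of non-backtracking walks of length $i$ from $x$ to $y$. This follows by induction on $i$, since the three-term recurrence defining $F_i$ matches the non-backtracking walk counting recursion: multiplying the previous matrix by $A$ extends each walk by one step, and subtracting $(k-1)$ times the matrix two steps earlier removes exactly those extensions that backtrack at the final step. The small discrepancy at $i=2$, where $F_2 = x F_1 - k F_0$ rather than $x F_1 - (k-1)F_0$, matches the fact that a non-backtracking walk of length $2$ from a vertex to itself is forbidden, so one subtracts over all $k$ neighbors rather than $k-1$. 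The consequence I need is that $F_i(A)_{xx}$ counts non-backtracking closed walks of length $i$ based at $x$; hence $F_i(A)_{xx} \geq 0$ in general, and $F_i(A)_{xx} = 0$ for $1 \leq i \leq g-1$, since the shortest non-backtracking closed walk in $G$ has length equal to the girth $g$.

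Combining the decomposition $f(A) = \sum_{i \geq 0} f_i F_i(A)$ with $\mathrm{tr}(F_0(A)) = v$, the vanishing of $\mathrm{tr}(F_i(A))$ for $1 \leq i \leq g - 1$, the non-negativity of $\mathrm{tr}(F_i(A))$ for $i \geq g$, and the hypothesis $f_i \leq 0$ for $i \geq g$ from condition $(2)$, I obtain
\[
\mathrm{tr}(f(A)) \;=\; f_0\, v \;+\; \sum_{i \geq g} f_i\, \mathrm{tr}(F_i(A)) \;\leq\; f_0\, v.
\]
On the other hand, spectral decomposition, together with $m_1 = 1$ (since $G$ is connected) and condition $(1)$, gives
\[
\mathrm{tr}(f(A)) \;=\; \sum_{j=1}^r m_j\, f(\lambda_j) \;\geq\; f(k).
\]
Chaining these two inequalities yields $f(k) \leq f_0\, v$, and hence $v \geq f(k)/f_0$.

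The only step with any real substance is the combinatorial interpretation of $F_i(A)$ together with the vanishing of its diagonal entries below the girth; everything else is a clean two-way trace comparison, in the spirit of the classical Delsarte-type LP bounds adapted here to the non-backtracking walk setting for regular graphs.
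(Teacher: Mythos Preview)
Your proof is correct and follows essentially the same approach as the paper: compute $\operatorname{tr}(f(A))$ two ways, using the spectral decomposition on one side and the combinatorial interpretation of $F_i(A)$ as a non-backtracking walk count on the other, then chain the resulting inequalities. Your write-up even adds a helpful explanation of why the three-term recurrence for $F_i$ matches the non-backtracking walk recursion (including the $i=2$ anomaly), which the paper simply cites.
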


\begin{proof}
Let $A$ be the adjacency matrix of $G$, with distinct eigenvalues $k=\lambda_1>\lambda_2>\cdots>\lambda_r$.  
Then $A$ admits the spectral decomposition $A=\sum_{i=1}^{r}\lambda_iE_i$.  
Consequently, the matrix $f(A)$ can be written in two equivalent forms:
\[
f(A)=\sum_{i=1}^{r}f(\lambda_i)E_i=\sum_{j\ge0}f_jF_j(A).
\]
Taking traces on both sides gives
\[
f(k)\le\operatorname{tr}\!\left(\sum_{i=1}^{r}f(\lambda_i)E_i\right)
=\operatorname{tr}\!\left(\sum_{j\ge0}f_jF_j(A)\right)
\le vf_0.
\]
In the last inequality, we use the fact (see Theorem~1 in \cite{NozakiLP}) that the $(x,x)$-entry of $F_i(A)$ equals the number of closed non-backtracking walks of length $i$ starting and ending at $x$.  
Hence $\operatorname{tr}F_i(A)=0$ for $0<i<g$, and $\operatorname{tr}(f_iF_i(A))\le0$ for $i\ge g$.  
These inequalities together imply $v\ge f(k)/f_0$.  
\end{proof}
\begin{remark} \label{rem:lp}
    The equality holds in the LP bound if and only if $f(\lambda_i)=0$ for each $i\geq 2$ and $\operatorname{tr}(f_i F_i(A))=0$ for each $i \geq g$. 
    In particular, for the second condition, if $\operatorname{tr}F_i(A)>0$ holds, then $f_i=0$. 
\end{remark}
From the LP bound, we obtain an alternative proof of the Moore bound (Lemma \ref{girth restriction}).

\begin{proof}[Proof of Lemma \ref{girth restriction}]
(1) Apply the LP bound to the polynomial
\begin{equation} \label{eq:lp1}
f(x)=\left(\sum_{i=0}^{d} F_i(x)\right)^2.     
\end{equation}
The polynomial $f(x)$ satisfies the assumptions of the LP bound, and
\[
f(k)=\left(\sum_{i=0}^{d} F_i(k)\right)^2,
\qquad
f_0=\sum_{i=0}^{d} F_i(k),
\]
where $f_0$ is computed from Theorem~3 in \cite{NozakiLP}.
Hence,
\[
n(k,g)\;\ge\;\frac{f(k)}{f_0}
\;=\;\sum_{i=0}^{d} F_i(k)
\;=\;1+k\sum_{i=0}^{d-1}(k-1)^i,
\]
where $F_0(k)=1$ and $F_i(k)=k(k-1)^{\,i-1}$ for $i\ge 1$.

\smallskip
(2) Similarly, consider
\begin{equation}\label{eq:square}
f(x)=(x+k)\left(\sum_{i=0}^{\lfloor (d-1)/2 \rfloor} F_{\,d-1-2i}(x)\right)^2 .
\end{equation}
This polynomial also satisfies the LP conditions, and
\[
f(k)=2k\left(\sum_{i=0}^{\lfloor (d-1)/2 \rfloor} F_{\,d-1-2i}(k)\right)^2,
\qquad
f_0=k\sum_{i=0}^{\lfloor (d-1)/2 \rfloor} F_{\,d-1-2i}(k).
\]
Therefore,
\[
n(k,g)\;\ge\;\frac{f(k)}{f_0}
\;=\;2\sum_{i=0}^{\lfloor (d-1)/2 \rfloor} F_{\,d-1-2i}(k)
\;=\;2\sum_{i=0}^{d-1}(k-1)^i,
\]
where $F_i(k)=k(k-1)^{\,i-1}=(k-1)^{\,i-1}+(k-1)^i$ for $i\ge 1$.
\end{proof}

Theorem~\ref{thm:bound} is an analogue of the linear programming bound for spherical $t$-designs \cite[Theorem~5.10]{DGS77}.  
The LP bound for spherical $t$-designs can occasionally improve upon the absolute bound, although only a single such case is known so far (see \cite{BD2001}).  
It remains an interesting question whether there exists a pair $(k,g)$ for which the LP bound for girth improves the Moore bound.

The following result was proved in \cite[Theorem~6.3 and Remark~6.4]{CKMNO2022}.

\begin{theorem}\label{thm:bound}
Let $G_j(x)=\sum_{i=0}^{j}F_i(x)$, and let $\tau_j$ denote the largest zero of $G_j(x)$.  
For $\theta\in[-1,2\sqrt{k-1})$, there exists an integer $d\ge1$ such that $\tau_{d-1}<\theta\le\tau_d$.  
Then  
\[
v(k,\theta)\le M(k,\theta)
:=1+k\sum_{i=0}^{d-2}(k-1)^i+\frac{k(k-1)^{d-1}}{c_\theta},
\]
where $c_\theta=-F_d(\theta)/G_{d-1}(\theta)$. Also, $M(k,\theta)$ is monotonically increasing for $\theta\in[-1,2\sqrt{k-1})$.  
\end{theorem}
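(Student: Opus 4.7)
The plan is to split the theorem into two pieces: the bound $v(k,\theta) \le M(k,\theta)$, which I will deduce from the already-established inequality \eqref{eq:vklambda}, and the monotonicity of $M(k,\theta)$, which I will handle by analyzing $c_\theta$ on each interval $(\tau_{d-1},\tau_d]$ separately and then checking continuity at the crossover points.

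For the upper bound, the key step is to realize $\theta$ as $\lambda_2(T(k,t,c))$ for a suitable choice of $(t,c)$, so that \eqref{eq:vklambda} applies directly. Recall from \cite{Sebisiamvktheta} that the non-$k$ eigenvalues of $T(k,t,c)$ are the roots of $G_{t-2}(x) + F_{t-1}(x)/c$. Taking $t = d+1$, the equation $G_{d-1}(\theta) + F_d(\theta)/c = 0$ forces $c = c_\theta = -F_d(\theta)/G_{d-1}(\theta)$. On $(\tau_{d-1},\tau_d]$ one has $G_{d-1}(\theta) > 0$ (since $\tau_{d-1}$ is the largest root of $G_{d-1}$ and $G_{d-1}$ has positive leading coefficient), while a sign analysis using the identity $G_d = G_{d-1} + F_d$ and the location of $\theta$ relative to the largest zeros of $G_d$ forces $F_d(\theta) \le 0$, so $c_\theta \ge 1$, with equality precisely at $\theta = \tau_d$. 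It remains to check that $\theta$ is actually the \emph{second}-largest eigenvalue of $T(k,d+1,c_\theta)$, not some smaller one; this follows from the interlacing of zeros of the orthogonal polynomials associated to the three-term recurrence \eqref{eq:Fk}, together with the hypothesis $\tau_{d-1} < \theta$. Invoking \eqref{eq:vklambda} with $(t,c)=(d+1,c_\theta)$ gives $v(k,\theta) \le M(k,d+1,c_\theta)$, which simplifies to the stated $M(k,\theta)$.

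For the monotonicity, fix $d$ and vary $\theta \in (\tau_{d-1},\tau_d]$. Only the summand $k(k-1)^{d-1}/c_\theta$ depends on $\theta$, so it suffices to show $c_\theta$ is strictly decreasing. A Christoffel--Darboux-type identity for the $F_i$ yields $(F_d/G_{d-1})'(\theta) > 0$ on this interval, i.e.\ $c_\theta' < 0$. The boundary behavior is transparent: $c_{\tau_d} = 1$ comes from $G_d(\tau_d) = G_{d-1}(\tau_d) + F_d(\tau_d) = 0$, and $c_\theta \to +\infty$ as $\theta \to \tau_{d-1}^+$ since $G_{d-1}(\tau_{d-1}) = 0$ while $F_d(\tau_{d-1}) \neq 0$. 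Hence $M(k,\theta)$ strictly increases on $(\tau_{d-1},\tau_d]$ from the limit $1+k\sum_{i=0}^{d-2}(k-1)^i$ up to $1+k\sum_{i=0}^{d-1}(k-1)^i$. For continuity across consecutive intervals, observe that at $\theta = \tau_d$ the index $d$ advances by one; computing $c_\theta$ from the next pair $(F_{d+1},G_d)$, one again finds $c_\theta \to +\infty$ as $\theta \to \tau_d^+$, so $M(k,\theta)$ resumes precisely at $1+k\sum_{i=0}^{d-1}(k-1)^i$. Splicing the intervals together gives monotonicity on the whole range $[-1,2\sqrt{k-1})$.

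The main obstacle I expect is confirming that $\theta$ is genuinely $\lambda_2(T(k,d+1,c_\theta))$, rather than some smaller eigenvalue; this is where one must extract the correct interlacing statement for the roots of $G_j(x)$ and of $G_{j}(x) + F_{j+1}(x)/c$ from the orthogonality of the $F_i$ with respect to the explicit measure associated to \eqref{eq:Fk}. The sign analysis for $F_d$ on $(\tau_{d-1},\tau_d]$ and the Christoffel--Darboux computation for $c_\theta'$ are routine in principle but delicate in practice, since they depend on carefully tracking the position of $\theta$ relative to the zeros of several consecutive polynomials in the sequence.
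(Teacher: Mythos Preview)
The paper does not prove this theorem at all: it is quoted verbatim as \cite[Theorem~6.3 and Remark~6.4]{CKMNO2022} and used as a black box. So there is no proof in the paper to compare your outline against.

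That said, your outline is correct and is essentially the argument given in the cited reference. Two small remarks that would streamline the details you flag as obstacles. First, you do not need the orthogonality of the $F_i$ to see that $\theta$ is the \emph{largest} non-$k$ eigenvalue of $T(k,d+1,c_\theta)$: the $G_j$ themselves obey the three-term recurrence $G_j=xG_{j-1}-(k-1)G_{j-2}$, so the zeros of $G_{d-1}$ and $G_d$ interlace; writing $p_c(x):=cG_{d-1}(x)+F_d(x)=(c-1)G_{d-1}(x)+G_d(x)$ and using $c_\theta\ge 1$, one sees $p_{c_\theta}(x)>0$ for all $x>\tau_d$, while the largest root of $p_c$ moves continuously and strictly downward from $\tau_d$ (at $c=1$) toward $\tau_{d-1}$ (as $c\to\infty$), so it hits $\theta$ exactly once. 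Second, for the monotonicity of $c_\theta$ you can avoid an explicit Christoffel--Darboux computation: since $c_\theta=1-G_d(\theta)/G_{d-1}(\theta)$, it suffices that $G_d/G_{d-1}$ is strictly increasing on $(\tau_{d-1},\infty)$, which is a standard consequence of the interlacing of their zeros (equivalently, all residues in the partial-fraction expansion of $G_d/G_{d-1}$ have the same sign). Your endpoint computations $c_{\tau_d}=1$ and $c_\theta\to+\infty$ as $\theta\downarrow\tau_{d-1}$ are correct and give the required continuity of $M(k,\theta)$ across consecutive intervals.
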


We now give an alternative proof of the following theorem from \cite{EKSJ2023}.

\begin{theorem}[Exoo, Kolokolnikov, Janssen and Salamon \cite{EKSJ2023}]
Let $\lambda$ denote the second eigenvalue of a $k$-regular Moore graph with girth~$g$, i.e., a graph attaining the Moore bound.  
Then the second eigenvalue of any connected $k$-regular graph with girth~$g$ is greater than or equal to~$\lambda$.  
\end{theorem}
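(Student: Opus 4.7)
The plan is to combine Theorem~\ref{thm:bound} with the Moore bound of Lemma~\ref{girth restriction}, exploiting the fact that the Moore graph is itself a Moore polygon realizing equality in the linear programming bound~(\ref{eq:vklambda}). Fix a connected $k$-regular graph $G$ of girth $g$, write $\theta := \lambda_2(G)$, and let $\lambda$ denote the second eigenvalue of the given Moore graph.

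First I would dispose of the case $\theta \geq 2\sqrt{k-1}$ trivially: Alon--Boppana forces $\lambda < 2\sqrt{k-1}$ for the Moore graph, so $\theta \geq 2\sqrt{k-1} > \lambda$ immediately. Under the remaining assumption $\theta \in [-1, 2\sqrt{k-1})$, Theorem~\ref{thm:bound} supplies the upper bound $|V(G)| \leq v(k,\theta) \leq M(k,\theta)$, while Lemma~\ref{girth restriction} supplies the matching lower bound $|V(G)| \geq n(k,g)$.

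The crux of the argument is the identity $M(k,\lambda) = n(k,g)$. The Moore graph is distance-regular with intersection array $\{k, k-1, \ldots, k-1; 1, 1, \ldots, 1, c\}$ and diameter $d$, where $c = 1$ if $g = 2d+1$ and $c = k$ if $g = 2d$. In either case, $\lambda = \lambda_2(T(k, d+1, c))$, and the usual sum-of-sphere-sizes formula $1 + \sum_{i=0}^{d-2} k(k-1)^i + k(k-1)^{d-1}/c$ for the order of a distance-regular graph with this array coincides with $M(k, d+1, c)$, which in turn equals $M(k, \lambda)$ in the parametrization of Theorem~\ref{thm:bound}. Since the Moore graph attains the Moore bound, $M(k, \lambda) = n(k, g)$. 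Chaining
\[
M(k, \lambda) \;=\; n(k, g) \;\leq\; |V(G)| \;\leq\; M(k, \theta),
\]
and invoking the stated monotonicity of $M(k, \cdot)$ on $[-1, 2\sqrt{k-1})$, we conclude $\lambda \leq \theta = \lambda_2(G)$.

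The main obstacle is verifying $M(k, d+1, c) = n(k, g)$ in the even-girth case. For odd $g = 2d+1$ (so $c = 1$), both sides reduce immediately to $1 + k\sum_{i=0}^{d-1}(k-1)^i$. For even $g = 2d$ (so $c = k$), one must check $1 + k \sum_{i=0}^{d-2}(k-1)^i + (k-1)^{d-1} = 2\sum_{i=0}^{d-1}(k-1)^i$, which is a routine telescoping computation using $k(k-1)^i = (k-1)^{i+1} + (k-1)^i$. A secondary bookkeeping step is confirming that $\lambda = \lambda_2(T(k, d+1, c))$ lies in $[-1, 2\sqrt{k-1})$ and that the value $c_\theta$ determined by $\theta = \lambda$ in Theorem~\ref{thm:bound} equals the intended $c \in \{1, k\}$; both follow from the three-term recurrence~(\ref{eq:Fk}).
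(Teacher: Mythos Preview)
Your proof is correct and follows essentially the same route as the paper's: both identify the Moore graph as attaining equality $M(k,\lambda)=n(k,g)$ in Theorem~\ref{thm:bound} (with $c_\lambda=1$ for odd girth and $c_\lambda=k$ for even girth), and then use the monotonicity of $M(k,\cdot)$ together with the Moore lower bound $|V(G)|\geq n(k,g)$ to force $\lambda_2(G)\geq\lambda$. The only cosmetic difference is that the paper computes $c_\lambda$ directly from the polynomial identities for $G_d$ and $F_d$, whereas you infer it from the distance-regular structure of the Moore graph; the ``bookkeeping step'' you flag at the end is precisely the computation the paper carries out explicitly.
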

\begin{proof}
We use the notation from Theorem~\ref{thm:bound}.  
From Remark \ref{rem:lp} and \eqref{eq:lp1}, the non-trivial eigenvalues of the Moore graph of girth $2d+1$ are the zeros of $\sum_{i=0}^d F_i(x)=G_d(x)$. 
The second eigenvalue of the Moore graph is $\tau_d$ and 
\[c_{\tau_d}=-\frac{F_d(\tau_d)}{G_{d-1}(\tau_d)}
=-\frac{G_d(\tau_d)-G_{d-1}(\tau_d)}{G_{d-1}(\tau_d)}=1. 
\] 
The Moore graph attains the bound $v(k,\tau_d)\le M(k,\tau_d)$.  
If the second eigenvalue of another graph is smaller than~$\tau_d$, then by Theorem~\ref{thm:bound} its order satisfies $v(k,\lambda)<M(k,\tau_d)$, contradicting the fact that a graph of girth~$2d+1$ must have order at least $M(k,\tau_d)$.

From Remark \ref{rem:lp} and \eqref{eq:square}, 
the second eigenvalue of the Moore graph of girth $2d$ 
is the largest zero $\kappa_d$ of 
\[(x+k)\left(\sum_{i=0}^{\lfloor (d-1)/2 \rfloor} F_{\,d-1-2i}(x)\right)=
kG_{d-1}(x)+F_d(x).\] 
Then, we have 
\[
c_{\kappa_d}=-\frac{F_d(\kappa_d)}{G_{d-1}(\kappa_d)}=-\frac{F_d(\kappa_d)}{-F_d(\kappa_d)/k}=k. 
\]
The Moore graph again attains the bound 
$v(k,\kappa_d)\le M(k,\kappa_d)$, and the same argument as above applies.  
\end{proof}

A $k$-regular graph with girth~$g$ is called a \emph{$(k,g)$-cage} if its order attains the minimum possible value~$n(k,g)$. From \cite{EJ2008}, we know that apart from the Moore graphs, the known $(k,g)$-cages are as follows:
\begin{align*} &n(3,7)=24, & &n(3,9)=58,& &n(3,10)=70, & &n(3,11)=112,& n(4,5)=19, \\ &n(4,7)=67, & &n(5,5)=30, & &n(6,5)=40, & &n(7,6)=90. \end{align*}

By Theorem~\ref{thm:bound}, there exists a value $\theta\in[-1,2\sqrt{k-1})$ such that $M(k,\theta)=n(k,g)$.  
If the second eigenvalue of a $k$-regular graph is smaller than~$\theta$, then its order is smaller than~$n(k,g)$, which is impossible.  
Hence any $k$-regular graph with girth~$g$ must have second eigenvalue at least~$\theta$.

This observation refines the upper bound $AC(k,g)$ on the Laplacian second eigenvalue $k-\lambda_2$ given in~\cite[Table~1]{EKSJ2023}, for the cases where cages are known:
{\small \begin{align*} &AC(3,7)=1.88793 \, (1.1864), & &AC(3,9)=0.732465\,  (0.8088),& &AC(3,10)=0.676596\, (0.7118), &\\
&AC(3,11)=0.572485\, (0.6069),&
&AC(4,5)=2.59146\, (2.6972), & &AC(4,7)=1.63449\, (1.7466), & \\
&AC(5,5)=3.31619\, (3.4384), & &AC(6,5)=4.14832\, (4.2087), &  &AC(7,6)=4.51037\, (4.5505). & \end{align*}}
Here, the values in the parentheses are the known upper bounds given in \cite{EKSJ2023}. 
Note that Table~1 in~\cite{EKSJ2023} omits boldface for the entries corresponding to the Moore graphs of $(k,g)=(7,5), (i,8), (i,12)$ with $i=4,5,6,8,9,10$.  

\section{Alon-Boppana-type bounds} \label{sec:6}

In this section, we show how \eqref{eq:vklambda} can be used to obtain various Alon-Boppana-type bounds. The Alon-Boppana theorem \cite{A1986} states that if $G$ is a connected $k$-regular graph with diameter $D$, then 
\begin{equation}\label{eq:ab0}
    \lambda_2(G)\geq 2\sqrt{k-1}-\frac{2\sqrt{k-1}-1}{\lfloor D/2\rfloor}.
\end{equation}
This is a fundamental result in spectral graph theory that sets the benchmark upper bound of $2\sqrt{k-1}$ for the second eigenvalues of sequences of $k$-regular Ramanujan graphs. A sharper bound than \eqref{eq:ab0} was obtained by Friedman (see Prop. 3.2 and Cor. 3.6 in \cite{Fri1}) who showed that if $G$ is a $k$-regular graph with diameter $D\geq 2r$, then $\lambda_2(G)\geq 2\sqrt{k-1}\cos\left(\frac{\pi}{r+1}\right)$. This is the same as stating that if $G$ is a $k$-regular graph with $\lambda_2(G)<2\sqrt{k-1}\cos\left(\frac{\pi}{r+1}\right)$, then $D\leq 2r-1$. The Moore or degree-diameter bound implies that 
\begin{equation}\label{eq:ab1}
    v\leq 1+k+k(k-1)+\ldots +k(k-1)^{2r-2}.
\end{equation}

Note that if $\theta=2\sqrt{k-1}\cos\left(\frac{\pi}{r+1}\right)$, then \eqref{eq:vklambda} implies that any connected $k$-regular graph with $v$ vertices must satisfy
\begin{equation}\label{eq:ab2}
v<1+k+k(k-1)+\ldots +k(k-1)^r.
\end{equation}
See \cite{CioabaRIMS} for a proof or the next theorem for a stronger result. 

Motivated by the Alon-Boppana theorem and based on computational results, Kolokolnikov \cite[Conj. 4.5]{Kolo15} conjectured that if a cubic graph has order $2^{d+1}-2$, then its algebraic connectivity (smallest positive eigenvalue of the Laplacian) is at most $3-2\sqrt{2} \cos (\pi/d)$ or equivalently, its second eigenvalue is at least $2\sqrt{2}\cos (\pi/d)$. 

We now prove the following general result for any valency $k\geq 3$ (the case $k=3$ corresponding to Kolokolnikov's conjecture).
\begin{theorem}
Any connected $k$-regular graph of order at least $(2(k-1)^{d}-2)/(k-2)$ has algebraic connectivity at most $k-2\sqrt{k-1}\cos(\pi/d)$ or second adjacency matrix eigenvalue at least $2\sqrt{k-1}\cos(\pi/d)$. 
\end{theorem}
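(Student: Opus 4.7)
The plan is to instantiate the spectral Moore bound \eqref{eq:vklambda} at the parameters $t = d+1$ and $c = k$. With these choices the tridiagonal matrix $T(k,d+1,k)$ has all-zero diagonal, and its associated intersection array $\{k,k-1,\dots,k-1;\,1,\dots,1,k\}$ is precisely that of the hypothetical bipartite generalized $d$-gon of valency $k$. A direct algebraic simplification gives
\[
M(k,d+1,k) \;=\; 1 + k\sum_{i=0}^{d-2}(k-1)^i + (k-1)^{d-1} \;=\; \frac{2(k-1)^d - 2}{k-2},
\]
which is exactly the order threshold appearing in the hypothesis.

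Next, I would identify $\theta_0 := \lambda_2(T(k,d+1,k))$ with $2\sqrt{k-1}\cos(\pi/d)$. The non-trivial eigenvalues of $T(k,d+1,k)$ are the roots of $kG_{d-1}(x) + F_d(x)$, which Section~\ref{sec:7} already factors as $(x+k)\sum_{i=0}^{\lfloor (d-1)/2\rfloor} F_{d-1-2i}(x)$. Introducing auxiliary polynomials $\tilde{F}_j$ satisfying the homogeneous Chebyshev-type recurrence $\tilde F_0 = 1$, $\tilde F_1 = x$, $\tilde F_{j+1} = x\tilde F_j - (k-1)\tilde F_{j-1}$, one checks the identity $F_j = \tilde F_j - \tilde F_{j-2}$ for $j \geq 2$; telescoping then yields $\sum_{i=0}^{\lfloor (d-1)/2\rfloor} F_{d-1-2i}(x) = \tilde F_{d-1}(x)$. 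Under the substitution $x = 2\sqrt{k-1}\cos\phi$, $\tilde F_{d-1}(x)$ becomes $(k-1)^{(d-1)/2}\sin(d\phi)/\sin\phi$, whose largest root lies at $\phi = \pi/d$, giving $\theta_0 = 2\sqrt{k-1}\cos(\pi/d)$.

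The theorem then follows from \eqref{eq:vklambda} directly. Suppose $G$ is connected $k$-regular with $v(G) \geq \frac{2(k-1)^d - 2}{k-2}$ and $\lambda_2(G) \leq \theta_0$. Then, since $v(k,\cdot)$ is monotonically non-decreasing,
\[
v(G) \;\leq\; v(k,\theta_0) \;\leq\; M(k,d+1,k) \;=\; \frac{2(k-1)^d - 2}{k-2},
\]
so equality holds throughout. The characterization of equality in \eqref{eq:vklambda} forces $G$ to be a distance-regular Moore polygon with intersection array $\{k,k-1,\dots,k-1;\,1,\dots,1,k\}$, which in turn has second eigenvalue exactly $\theta_0$. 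Hence $\lambda_2(G) \geq 2\sqrt{k-1}\cos(\pi/d)$ in all cases, equivalently the algebraic connectivity is at most $k - 2\sqrt{k-1}\cos(\pi/d)$. The main obstacle is the explicit identification of $\theta_0$ as $2\sqrt{k-1}\cos(\pi/d)$; this is a clean Chebyshev-polynomial calculation that runs in parallel to the even-girth Moore-bound computation already carried out in Section~\ref{sec:7}, so no substantially new difficulty arises.
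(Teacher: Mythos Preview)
Your proof is correct and follows essentially the same route as the paper: both instantiate the spectral Moore bound at $c=k$, $t=d+1$, identify $\lambda_2(T(k,d+1,k))=2\sqrt{k-1}\cos(\pi/d)$ via a Chebyshev-type computation, and verify $M(k,d+1,k)=\tfrac{2(k-1)^d-2}{k-2}$. The only tactical difference is in the endgame: the paper invokes the monotonicity of $M(k,\theta)$ from Theorem~\ref{thm:bound} to get a strict inequality when $\lambda_2<\theta_0$, whereas you use the equality characterization of \eqref{eq:vklambda} (if $\lambda_2\le\theta_0$ and $v\ge M$ then $G$ is the Moore polygon, forcing $\lambda_2=\theta_0$), which is a perfectly valid substitute.
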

\begin{proof}
We show that if the second eigenvalue is smaller than $2\sqrt{k-1}\cos(\pi/d)$, then the order is smaller than $(2(k-1)^{d}-2)/(k-2)$. From Theorem~\ref{thm:bound}, it suffices to prove that
\begin{equation*}
M(k,2\sqrt{k-1}\cos(\pi/d))=\frac{2(k-1)^{d}-2}{k-2}.
\end{equation*}
    
From \cite[III.3, equation (3.9)]{BannaiIto84}), we know that $\tau_{d-1}<2\sqrt{k-1}\cos(\pi/d)<\tau_d$ and  
\begin{align*}
G_d(2\sqrt{k-1}\cos t)&=\frac{(k-1)^{\frac{d-1}{2}}}{\sin t}(\sqrt{k-1} \sin(d+1)t+\sin dt), \\
G_{d-1}(2\sqrt{k-1}\cos t)&=\frac{(k-1)^{\frac{d-2}{2}}}{\sin t}(\sqrt{k-1} \sin dt+\sin(d-1)t).
\end{align*}

Therefore, 
\begin{align*}
c_\theta&=-\frac{F_d(2\sqrt{k-1}\cos(\pi/d))}{G_{d-1}(2\sqrt{k-1}\cos(\pi/d))}=1-\frac{G_d(2\sqrt{k-1}\cos(\pi/d))}{G_{d-1}(2\sqrt{k-1}\cos(\pi/d))}=k. 
    \end{align*}
Finally, we get that
\[
M(k,2\sqrt{k-1}\cos(\pi/d))=1+k\sum_{i=0}^{d-2}(k-1)^i+\frac{k(k-1)^{d-1}}{k}=\frac{2(k-1)^{d}-2}{k-2}. \qedhere 
\]
\end{proof}

For clarity, note that the polynomials $(F_i(x))_{i\geq 0}$ involved in the equation (3.9) in \cite[III.3]{BannaiIto84} are our polynomials $(G_i(x))_{i\geq 0}$ from this proof. Indeed, the sequence $(G_i(x))_{i \geq 0}$ satisfies $G_0(x)=1$, $G_1(x)=x+1$, $G_2(x)=x^2+x-(k-1)$ and, for $i\geq 3$, 
\begin{align*}
G_i(x)
&=\sum_{j=0}^i F_j(x)=\sum_{j=3}^i \bigl(xF_{j-1}(x)-(k-1)F_{j-2}(x)\bigr)+(x^2-k)+x+1\\
&=\sum_{j=3}^i \bigl(xF_{j-1}(x)-(k-1)F_{j-2}(x)\bigr)
   +x\bigl(F_1(x)+F_0(x)\bigr)-(k-1)F_0(x)\\
&=xG_{i-1}(x)-(k-1)G_{i-2}(x).
\end{align*}
Thus, the sequence of polynomials $(G_i(x))_{i\geq 0}$ satisfies the same three-term recurrence
\[
G_i(x)=xG_{i-1}(x)-(k-1)G_{i-2}(x)\quad (i\geq 3)
\]
as the sequence $(F_i(x))_{i\geq 0}$ appearing in equation~(3.9) in \cite[III.3]{BannaiIto84};
see also equation~(3.3) in \cite[III.3]{BannaiIto84} for the definition of $(F_i(x))_{i\geq 0}$.

As seen in \eqref{eq:ab0}, the Alon-Boppana theorem provides a non-trivial lower bound for the second eigenvalue of a regular graph when the diameter of the graph is four or more. It is of interest to obtain lower bounds for the second eigenvalue of regular graphs of small diameter or of large valency and some recent work has been done in this direction \cite{Balla,Ihringer,RST,Zhang}. We now show how one can use \eqref{eq:vklambda} to obtain such bounds. 
\begin{proposition}\label{prop:Tk3c}
Let $k\geq 3$ be an integer. 
\begin{enumerate}
    \item Let $\theta\in (0,\sqrt{k})$. If $G$ is a $k$-regular graph with $v>1+k+\frac{k(k-1)(\theta+1)}{k-\theta^2}$ vertices, then $\lambda_2(G)>\theta$.
    \item Let $\alpha\in (0,1)$. If $G$ is a $k$-regular graph with $v>1+k+\frac{(k-1)(\sqrt{k}+1)}{1-\alpha^2}$ vertices, then $\lambda_2(G)>\alpha \sqrt{k}$.
\end{enumerate}
\end{proposition}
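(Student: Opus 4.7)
The plan is to derive both bounds by applying the general inequality \eqref{eq:vklambda} with $t=3$ and an appropriately chosen value of $c$, so the work reduces to computing $\lambda_2(T(k,3,c))$ as a function of $c$ and inverting this relation.

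First I would compute the characteristic polynomial of $T(k,3,c)$. Since $k$ is always an eigenvalue, I would factor out $(x-k)$ and find that the remaining two eigenvalues are the roots of $x^2 + cx + (c-k) = 0$. In particular,
\[
\lambda_2(T(k,3,c)) \;=\; \frac{-c + \sqrt{c^2 - 4c + 4k}}{2}.
\]
Setting $\lambda_2(T(k,3,c)) = \theta$ and squaring after isolating the radical yields the linear equation $\theta^2 + c\theta + c = k$, i.e.
\[
c \;=\; \frac{k - \theta^2}{\theta + 1},
\]
which is strictly positive for every $\theta \in (0,\sqrt{k})$. Substituting this $c$ into $M(k,3,c) = 1 + k + \frac{k(k-1)}{c}$ produces
\[
M(k,3,c) \;=\; 1 + k + \frac{k(k-1)(\theta+1)}{k - \theta^2}.
\]
The bound \eqref{eq:vklambda} then says $v(k,\theta) \leq M(k,3,c)$. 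Taking the contrapositive yields Part~1: any $k$-regular graph whose order exceeds this quantity must have $\lambda_2 > \theta$.

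For Part~2, I would simply specialize Part~1 to $\theta = \alpha\sqrt{k}$ with $\alpha \in (0,1)$. Then $k - \theta^2 = k(1-\alpha^2)$ and $\theta + 1 = \alpha\sqrt{k}+1$, so Part~1 gives the stronger conclusion that any $k$-regular graph with $v > 1 + k + \frac{(k-1)(\alpha\sqrt{k}+1)}{1-\alpha^2}$ satisfies $\lambda_2(G) > \alpha\sqrt{k}$. Since $\alpha < 1$ implies $\alpha\sqrt{k} + 1 < \sqrt{k} + 1$, the hypothesis of Part~2 is a strictly stronger condition on $v$ than the one produced by Part~1, and the conclusion of Part~2 follows immediately.

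There is no real obstacle in this proof: everything reduces to a direct eigenvalue computation for a $3\times 3$ tridiagonal matrix combined with an appeal to \eqref{eq:vklambda}. The only bit of care is to verify that $c = (k-\theta^2)/(\theta+1)$ is positive on the whole interval $\theta \in (0,\sqrt{k})$ so that $M(k,3,c)$ is well defined and the bound is meaningful; this is immediate from the ranges of $\theta$ and $\alpha$.
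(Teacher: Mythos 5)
Your proposal is correct and follows essentially the same route as the paper's own proof: choose $c=(k-\theta^2)/(\theta+1)$ so that $\lambda_2(T(k,3,c))=\theta$, apply the bound \eqref{eq:vklambda} with $t=3$, and obtain Part~2 by specializing to $\theta=\alpha\sqrt{k}$. Your only addition is to spell out why the stated bound in Part~2 (with $\sqrt{k}+1$ in place of $\alpha\sqrt{k}+1$) is a weaker consequence of Part~1, a small computation the paper leaves implicit.
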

\begin{proof}
\begin{enumerate}
\item Let $\theta\in (0,\sqrt{k})$. Take $c=\frac{k-\theta^2}{\theta+1}$ and consider the matrix
\begin{equation*}
T=T(k,3,c)= \begin{bmatrix}
    0&k&0\\
    1&0&k-1\\
    0&c&k-c
\end{bmatrix}.
\end{equation*}
Its characteristic polynomial is 
\begin{equation*}
(x-k)[F_2(x)+c(F_1(x)+F_0(x))]=(x-k)(x^2+cx+c-k).
\end{equation*}
From our choice of $c$, we get that $\lambda_2(T)=\theta$. By the bound \eqref{eq:vklambda}, we get that 
\begin{equation*}
    v(k,\theta)\leq 1+k+\frac{k(k-1)}{c}=1+k+\frac{k(k-1)(\theta+1)}{k-\theta^2}.
\end{equation*}

\item 
This follows from the first part by taking $\theta=\alpha\sqrt{k}$.
\end{enumerate}\end{proof}

When $\alpha=1/2$, we obtain that if $v>1+k+\frac{2(k-1)(\sqrt{k}+2)}{3}$, then $\lambda_2(G)\geq \frac{\sqrt{k}}{2}$. This proves the first part of \cite[Thm 1.1]{Balla} or \cite[Thm. 1.6]{RST}. Using Prop. \ref{prop:Tk3c} and some technical calculations which we omit here, we can also show that if $k$ is sufficiently large and $G$ is a $k$-regular graph with $v$ vertices and $v^{2/3}<k\leq v^{3/4}$, then $\lambda_2(G)>\frac{n}{2k}-1$. This corresponds to the second part of the theorems mentioned above.

\end{document}